\title{Schwarz type lemmas for generalized holomorphic maps between pseudo-Hermitian manifolds and Hermitian manifolds}
\author{Tian Chong, Yuxin Dong\footnote{Supported by NSFC grant No. 11771087, and LMNS, Fudan}, Yibin Ren\footnote{Supported by NSFC grant No. 11801517}, Weike Yu\footnote{The corresponding author}}
\begin{document}

\newtheorem{definition}{Definition}[section]
\newtheorem{theorem}[definition]{Theorem}
\newtheorem{proposition}[definition]{Proposition}
\newtheorem{lemma}[definition]{Lemma}
\newtheorem{corollary}[definition]{Corollary}
\newtheorem{remark}[definition]{Remark}
\newtheorem{example}[definition]{Example}
\numberwithin{equation}{section}

\maketitle
\begin{abstract}
In this paper, we consider some generalized holomorphic maps between pseudo-Hermitian manifolds and Hermitian manifolds. By Bochner formulas and comparison theorems, we establish related Schwarz type results. As corollaries, Liouville theorem and little Picard theorem for basic CR functions are deduced. Finally, we study CR Carath\'eodory pseudodistance on CR manifolds. 
\end{abstract}
\section{Introduction}
\label{intro}
The classical Schwarz-Pick lemma \cite{Pic15} states that every holomorphic map from the unit disc $D$ of $\mathbb{C}$ into itself is distance-decreasing with respect to the Poincar\'e metric. This was later generalized by Ahlfors \cite{Ahl38} to holomorphic maps from the unit disc $D$ into a Riemannian surface with curvature bounded above by a negative constant. This lemma plays an important role in complex analysis and differential geometry, and has been extended to holomorphic maps between higher dimensional complex manifolds (cf. \cite{CCL79}, \cite{Che68}, \cite{Lu68}, \cite{Roy80}, \cite{Yau78}, etc.) and harmonic maps between Riemannian manifolds (cf. \cite{GH77}, \cite{She84}, etc.). An extremely useful generalization is Yau's Schwarz lemma \cite{Yau78}, which says that every holomorphic map from a complete K\"ahler manifold with Ricci curvature bounded from below by a constant $-K_1\leq 0$ into a Hermitian manifold with holomorphic bisectional curvature bounded from above by a constant $-K_2<0$ is distance-decreasing up to a constant $K_1 /K_2$. Later, Tosatti \cite{Tos07} generalized Yau's result to the Hermitian case. Recently, the authors in \cite{DRY19} extended this lemma to generalized holomorphic maps between pseudo-Hermitian manifolds.
 
The main purpose of this paper is to generalize Yau's Schwarz lemma to two classes of generalized holomorphic maps between pseudo-Hermitian manifolds and Hermitian manifolds, which are called $(J, J^N)$-holomorphic maps (see Definition \ref{definition 3}) and $(J^N,J)$-holomorphic maps (see Definition \ref{definition 4}) respectively. By computing the Bochner formulas for these maps and using the maximum principle, we derive the following Schwarz type lemmas. 
\begin{theorem}\label{theorem 1.1}
Let $(M^{2m+1},HM,J,\theta)$ be a complete pseudo-Hermitian manifold with pseudo-Hermitian Ricci curvature bounded from below by $-K_1\leq0$ and $\|A\|_{C^1}$ bounded from above where $A$ is the pseudo-Hermitian torsion. Let $(N^n,J^N,h)$ be a Hermitian manifold with holomorphic bisectional curvature bounded from above by $-K_2<0$. Then for any $(J,J^N)$-holomorphic map $f: M\rightarrow N$, we have
\[f^*h\leq\frac{K_1}{K_2}G_\theta.\]
In particular, if $K_1=0$, every $(J,J^N)$-holomorphic map from $M$ into $N$ is constant.
\end{theorem}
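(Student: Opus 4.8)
The plan is to adapt Yau's original maximum-principle argument to the degenerate sub-Riemannian geometry of $M$, with the pseudo-Hermitian torsion playing the role that is absent in the K\"ahler and Hermitian source cases. Since the conclusion is a pointwise tensor inequality and the constant $K_1/K_2$ is sharp, I would work not with the trace of $f^*h$ but with its top eigenvalue: let $u(x)$ denote the largest eigenvalue of $f^*h$ relative to $G_\theta$ at $x$, so that the desired inequality $f^*h\le\frac{K_1}{K_2}G_\theta$ is equivalent to $\sup_M u\le K_1/K_2$. The function $u$ is continuous but only Lipschitz where the top eigenvalue fails to be simple, a regularity defect that I would handle by the usual barrier (viscosity) device, replacing $u$ near a near-maximum point by a smooth function that touches it from above.

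The core of the proof is a Bochner formula for the sub-Laplacian $\Delta_b(\log u)$ computed with the Tanaka--Webster connection. Here the $(J,J^N)$-holomorphicity of $f$ is decisive: it allows the horizontal second derivatives of $f$ to be commuted and re-expressed through the curvatures of the two manifolds, exactly as the Cauchy--Riemann equations do in the classical setting. Two terms enter with the controlling signs. The lower bound $-K_1$ on the pseudo-Hermitian Ricci curvature of $M$ contributes a term $\ge -K_1$, while the upper bound $-K_2<0$ on the holomorphic bisectional curvature of $N$, evaluated along the top eigendirection of $f^*h$, contributes a term $\ge K_2\,u$; it is precisely the use of \emph{bisectional} rather than merely holomorphic sectional curvature that produces the sharp constant. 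Schematically I expect an estimate of the form
\[
\Delta_b(\log u)\ \ge\ K_2\,u\ -\ K_1\ +\ (\text{torsion terms}).
\]

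The main obstacle is the control of those torsion terms, which have no counterpart in Yau's or Tosatti's Hermitian setting. Commuting a horizontal derivative past the Reeb direction produces contributions involving the pseudo-Hermitian torsion $A$ and its first covariant derivative, and this is exactly why the hypothesis that $\|A\|_{C^1}$ be bounded is imposed. I would organize these terms so that the pieces carrying a factor of $\nabla_b(\log u)$ are discarded at the near-maximum points (where the horizontal gradient is small), while the remaining torsion contributions are absorbed using the $C^1$ bound without degrading the coefficient of the good term $K_2u$. Because $M$ is complete but noncompact, the ordinary maximum principle is unavailable, so I would instead invoke the generalized Omori--Yau maximum principle for $\Delta_b$, whose validity here rests on the CR sub-Laplacian comparison theorem announced in the introduction and requires precisely the standing hypotheses (completeness, the Webster--Ricci lower bound, and the $C^1$ torsion bound). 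Applying it to $\log u$ furnishes a sequence $x_k$ along which $\Delta_b(\log u)(x_k)$ is asymptotically nonpositive, $|\nabla_b(\log u)|(x_k)\to 0$, and $u(x_k)\to\sup_M u$; passing to the limit in the displayed inequality gives $0\ge K_2\sup_M u-K_1$, that is $\sup_M u\le K_1/K_2$, which is the claimed tensor inequality.

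Finally, in the case $K_1=0$ the inequality forces $u\equiv 0$, hence $f^*h\equiv 0$, so that the horizontal differential $df|_{HM}$ vanishes identically. Since $M$ is connected and the horizontal distribution $HM$ is bracket-generating, any two points are joined by a horizontal path, along each of which $f$ is constant; therefore $f$ is constant on all of $M$.
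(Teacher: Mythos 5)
Your architecture matches the paper's: a Bochner formula for a scalar measuring the size of $df$, the two curvature hypotheses feeding in with the signs you describe, and a maximum principle on the noncompact $M$ whose validity is supplied by the sub-Laplacian comparison theorem. Two of your choices, however, deviate from the paper in ways worth noting. First, the paper does not work with the top eigenvalue of $f^*h$ relative to $G_\theta$ but with the smooth trace $u=\sum_{\alpha,i}f^\alpha_i f^{\bar\alpha}_{\bar i}$, using the elementary bound $f^*h\le u\,G_\theta$. Your worry that only the eigenvalue yields the sharp constant is unfounded: because the hypothesis is on the \emph{bisectional} curvature, the curvature term in the Bochner identity contracts against the full matrix $\sum_i f^\beta_i f^{\bar\alpha}_{\bar i}$ in both slots and produces $K_2u^2$ for the trace itself, so $\sup u\le K_1/K_2$ comes out with no loss. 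Working with the trace avoids the viscosity/barrier machinery entirely; the only regularization needed is Calabi's trick for the distance function in the localization $\phi=(a^2-\gamma^2)^2u$ on geodesic balls, which is the paper's substitute for your black-box Omori--Yau principle (and is, in effect, its proof).

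Second, and more substantively, you misplace the role of the hypothesis $\|A\|_{C^1}<\infty$. For a $(J,J^N)$-holomorphic map the conditions $f^\alpha_{\bar i}=f^\alpha_0=0$ force $f^\alpha_{i0}=f^\alpha_{i\bar j}=0$, and the torsion components of the Tanaka--Webster curvature form live in the $\theta^k\wedge\theta$ and $\theta^{\bar k}\wedge\theta$ directions, so they are annihilated when one extracts the $(1,1)$-horizontal part that enters $\Delta_b u$. The paper's Bochner formula (3.13) therefore contains \emph{no} torsion terms at all; the $C^1$ bound on $A$ is used solely in the sub-Laplacian comparison theorem to control $\Delta_b\gamma$. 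This matters for your sketch: had torsion terms of the size you anticipate genuinely appeared, ``absorbing them without degrading the coefficient of $K_2u$'' would generically add an additive contribution depending on $\|A\|_{C^1}$ to the final bound and destroy the clean constant $K_1/K_2$. Your proof survives only because the terms you plan to absorb are in fact absent, which you would need to verify rather than assume. (Your concluding step for $K_1=0$ via bracket-generation of $HM$ is fine, though it is even simpler: $(J,J^N)$-holomorphicity already forces $df(\xi)=0$, so $u\equiv0$ kills all of $df$.)
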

\begin{theorem}\label{theorem 1.2}
 Let $(N^n,J^N,h)$ be a complete K\"ahler manifold with Ricci curvature bounded from below by $-K_1\leq0$. Let $(M^{2m+1},HM,J,\theta)$ be a Sasakian manifold with pseudo-Hermitian bisectional curvature bounded from above by $-K_2<0$.  Then for any $(J^N, J)$-holomorphic map $g: N\rightarrow M$, we have
\[ g^*G_\theta \leq\frac{K_1}{K_2}h.\]
In particular, if $K_1=0$, any $(J^N, J)$-holomorphic map is horizontally constant.
\end{theorem}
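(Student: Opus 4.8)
The plan is to carry over Yau's Chern--Lu argument \cite{Yau78} to this hybrid setting, with the Kähler manifold playing the role of the source and the Sasakian target supplying the negative curvature. The basic object is the nonnegative function
\[
u:=\operatorname{tr}_h\big(g^{*}G_\theta\big),
\]
the horizontal energy density of $g$. Since $g$ is $(J^{N},J)$-holomorphic (Definition \ref{definition 4}), the horizontal part $\pi_{H}\circ dg$ of its differential is complex linear, so $g^{*}G_\theta$ is a \emph{nonnegative} Hermitian $(1,1)$-form on $N$; in particular its eigenvalues relative to $h$ are all $\ge 0$. This positivity is what lets a trace bound be upgraded at no cost to the desired tensor bound: if I can show $u\le K_1/K_2$ everywhere, then in a frame diagonalizing $g^{*}G_\theta$ with respect to $h$ each eigenvalue is at most $u\le K_1/K_2$, which is exactly $g^{*}G_\theta\le \frac{K_1}{K_2}h$. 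So the whole problem reduces to the scalar estimate $\sup_N u\le K_1/K_2$.

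To obtain it I would compute a Bochner formula for $\log u$. Fixing a local unitary coframe on $N$ and an admissible pseudo-Hermitian coframe $\{\theta,\theta^{\alpha}\}$ on $M$, I would write the components of $\pi_{H}\circ dg$ in holomorphic coordinates, differentiate the holomorphicity relation twice, and commute covariant derivatives. Two structural features make the curvature terms come out cleanly. First, $N$ is Kähler, so its Chern connection is torsion free and the domain contribution is governed by the genuine Ricci curvature, entering with the sharp lower bound $-K_1$. Second, $M$ is Sasakian, so the pseudo-Hermitian torsion $A$ vanishes identically; this is the decisive simplification, since it removes all torsion contributions from the second covariant derivative of $g^{*}G_\theta$ and leaves the pseudo-Hermitian bisectional curvature as the only surviving target term. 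Bounding that term above by $-K_2$ and applying Cauchy--Schwarz (in the simultaneously diagonalized frame one has $\sum_{i,j}\lambda_i\lambda_j=u^{2}$) then produces the coercive lower-order term, and I expect an inequality of the schematic form
\[
\Delta\log u\;\ge\;2K_2\,u-2K_1
\]
at points where $u>0$, where $\Delta$ is the Laplace--Beltrami operator of $h$ and the overall normalization is arranged so that the right-hand side vanishes precisely at $u=K_1/K_2$.

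Next I would invoke the maximum principle. Since $(N,h)$ is complete with Ricci curvature bounded below by $-K_1$, the Omori--Yau maximum principle applies. If $u\equiv 0$ the conclusion is immediate, so assume $\sup_N u>0$ and work on $\{u>0\}$. Applying the principle to $\log u$, the super-linear (coercive) term $2K_2 u$ simultaneously forces $u$ to be bounded and pins its supremum: along an Omori--Yau sequence $\{x_k\}$ one has $u(x_k)\to\sup_N u$ and $\limsup_k\Delta\log u(x_k)\le 0$, whence $0\ge 2K_2\sup_N u-2K_1$ and therefore $\sup_N u\le K_1/K_2$. By the positivity remark above this is equivalent to $g^{*}G_\theta\le\frac{K_1}{K_2}h$. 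Finally, when $K_1=0$ the estimate gives $g^{*}G_\theta\le 0$; combined with $g^{*}G_\theta\ge 0$ this forces $g^{*}G_\theta\equiv 0$, i.e.\ $\pi_{H}\circ dg\equiv 0$, which is precisely the statement that $g$ is horizontally constant.

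The main obstacle I anticipate is the Bochner computation itself, because of the mismatch between the even-dimensional, integrable Kähler source and the odd-dimensional, non-integrable pseudo-Hermitian target. One must keep careful track of the Reeb direction and of the component of $dg$ along it, verify that the holomorphicity condition of Definition \ref{definition 4} makes $g^{*}G_\theta$ of pure type $(1,1)$, and check explicitly that the Sasakian condition $A\equiv 0$ eliminates every torsion term so that only the pseudo-Hermitian bisectional curvature remains to be estimated. This is exactly where the Sasakian hypothesis is used: unlike in Theorem \ref{theorem 1.1}, where the torsion is merely assumed bounded and its contributions must be carried through the entire computation, here $A\equiv 0$ lets them be discarded at the outset. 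A secondary technical point is the a priori possibility that $u$ is unbounded, which is absorbed by the coercivity of the term $2K_2 u$ when the generalized maximum principle is applied.
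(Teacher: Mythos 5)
Your proposal follows essentially the same route as the paper: both reduce the tensor inequality to the scalar bound $\sup v\le K_1/K_2$ for the horizontal energy density via the pointwise estimate $g^*G_\theta\le v\,h$, derive the Chern--Lu type inequality $\tfrac{1}{2}\Delta v\ge -K_1v+K_2v^2$ from the Bochner formula (the Sasakian condition killing all torsion terms and the K\"ahler condition supplying the Ricci bound), and conclude by Yau's generalized maximum principle. The only cosmetic difference is that the paper invokes Yau's quadratic-inequality proposition directly for $v$ instead of running Omori--Yau on $\log v$.
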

We remark that from Theorem \ref{theorem 1.1}, we can deduce Liouville theorem and little Picard theorem for basic CR functions (see Corollary \ref{corollary 4.2} and \ref{corollary 4.3} for details). In Theorem \ref{theorem 1.2}, when $\dim_{\mathbb{C}} N=1$, the hypothesis of pseudo-Hermitian bisectional curvature on $M$ can be replaced by pseudo-Hermitian sectional curvature(see Corollary \ref{corollary 5.3}).

As an application of Schwarz lemmas, we introduce CR Carath\'eodory pseudodistance on CR manifolds, which is invariant under CR isomorphisms. Making use of the relationships between Carath\'eodory pseudodistance and CR Carath\'eodory pseudodistance as well as the Schwarz lemma, we can give anorher Liouville theorem for $(J,J^N)$-holomorphic maps.

\section{Preliminaries}\label{sec2}
In this section, we will present some notations and facts of pseudo-Hermitian geometry and Hermitian geometry (cf. \cite{BG08}, \cite{DT07} for details).
\begin{definition}\label{definition 1}
Let $M$ be a real $2m+1$ dimensional orientable $C^\infty$ manifold. A CR structure on $M$ is a complex subbundle $T_{1,0}M$ of complex rank m of the complexified tangent bundle $TM\otimes\mathbb{C}$ satisfying
\begin{enumerate}[(i)]
\item $T_{1,0}M\cap T_{0,1}M=\{0\}, T_{0,1}M=\overline{T_{1,0}M}$
\item $[\Gamma(T_{1,0}M),\Gamma(T_{1,0}M)]\subseteq\Gamma(T_{1,0}M)$ \label{definition 1 (ii)}
\end{enumerate}
Then the pair $(M,T_{1,0}M)$ is called a CR manifold.
\end{definition}
The complex subbundle $T_{1,0}$ corresponds to a real subbundle of $TM$:
\begin{align}
HM=Re\{T_{1,0}M\oplus T_{0,1}M\}
\end{align}
which is called Levi distribution and carries a natural complex structure $J$ defined by $J(X+\bar{X})=i(X-\bar{X})$ for any $X\in T_{1,0}M$. The CR structure can also be said to be $(HM,J)$.  Let $(M, HM, J)$ and $(\tilde{M}, H\tilde{M}, \tilde{J})$ be two CR manifolds. A smooth map $f: M\rightarrow \tilde{M}$ is a CR map if it preserves the CR structures, that is, $df(HM)\subseteq H\tilde{M}$ and $df\circ J= \tilde{J}\circ df$ on $HM$. Moreover, if $f$ is a $C^\infty$ diffeomorphism, it is referred to as a CR isomorphism. 

Since $M$ is orientable and $HM$ is oriented by its complex structure $J$, it follows that there exist a global nowhere vanishing 1-form $\theta$ such that $HM=ker(\theta)$. Such $\theta$ is called a pseudo-Hermitian structure on $M$. The Levi form $L_\theta$ of a given pseudo-Hermitian structure $\theta$ is defined by
\begin{align}
L_\theta(X,Y)=d\theta(X,JY)
\end{align}
for any $X,Y\in HM$. The integrability condition \ref{definition 1 (ii)} in Definition \ref{definition 1} implies $L_\theta$ is $J$-invariant and symmetric. If $L_\theta$ is positive definite, then $(M,HM,J)$ is said to be strictly pseudoconvex. The quadruple $(M,HM,J,\theta)$ is called a pseudo-Hermitian manifold.

Since $L_\theta$ is positive definite, there is a step-2 sub-Riemannian structure $( HM, L_\theta)$ on $M$, and all sections of $HM$ together with their Lie brackets span $T_xM$ at each point $x\in M$. We say that a Lipschitz curve $\gamma: [0, l]\rightarrow M$ is horizontal if $\gamma'(t)\in HM$ a.e. $t\in[0, l]$. For any two points $p, q\in M$, by the theorem of Chow-Rashevsky (cf. \cite{Cho02}, \cite{Ras38}), there always exist such horizontal curves joining $p$ and $q$. Consequently, we may define the so-called Carnot-Carath\'eodory distance:
\begin{align*}
d^M_{cc}(p, q)=\inf\{\int_0^l \sqrt{L_\theta(\gamma', \gamma')}\, dt |\ \  &\gamma: [0, l]\rightarrow M \ \text{is a horizontal curve with}\\
&\ \gamma(0)=p \ \text{and} \  \gamma(l)=q  \}
\end{align*}
which induces to a metric space structure on $(M, HM, L_\theta)$.

For a pseudo-Hermitian manifold $(M,HM,J,\theta)$, there is a unique globally defined nowhere zero tangent vector field $\xi$ on $M$ called Reeb vector field such that $\theta(\xi)=1$, $\xi\rfloor d\theta=0$. Consequently there is a splitting of the tangent bundle $TM$
\begin{align}
TM=HM\oplus L,
\end{align}
where $L$ is the trivial line bundle generated by $\xi$.

Let $\pi_H:TM\rightarrow HM$ denote the natural projection morphism. Set
\begin{align}
G_\theta(X,Y)=L_\theta(\pi_HX, \pi_HY)
\end{align}
for any $X, Y\in TM$. We extend $J$ to an endomorphism of $TM$ by requiring that
\begin{align}
J\xi=0.
\end{align}
The $J$-invariance of $L_\theta$ implies $G_\theta$ is also $J$-invariant. Since $G_\theta$ coincides with $L_\theta$ on $HM\times HM$, Levi form $L_\theta$ can be extended to a Riemannian metric $g_\theta$ on $M$ by
\begin{align}
g_\theta=G_\theta+\theta\otimes\theta.
\end{align}
This metric is usually called the Webster metric. On a pseudo-Hermitian manifold, there is a canonical connection preserving the CR structure and the Webster metric, which is usually called the Tanaka-Webster connection.
\begin{theorem}[(cf. \cite{DT07})]\label{theorem 2.1}
Let $(M,HM,J,\theta)$ be a pseudo-Hermitian manifold with the Reeb vector field $\xi$ and Webster metric $g_\theta$. Then there is a unique linear connection $\nabla$ on $M$ satisfying the following axioms:
\begin{enumerate}[(i)]
\item $HM$ is parallel with respect to $\nabla$.
\item $\nabla J=0$, $\nabla g_\theta=0$.
\item The torsion $T_\nabla$ of $\nabla$ satisfies
\begin{align*}
T_\nabla(X,Y)=2d\theta(X,Y)\xi\ and\  T_\nabla(\xi,JX)+JT_\nabla(\xi,X)=0
\end{align*}
for any $X,Y\in HM$.
\end{enumerate}
\end{theorem}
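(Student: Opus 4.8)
The plan is to separate the statement into uniqueness and existence, handling uniqueness by a difference-tensor argument (which is short and purely algebraic) and existence by Webster's adapted-coframe construction (which carries the genuine work). Before either step I would record one consequence of axioms (i) and (ii): the Reeb field is parallel, $\nabla\xi=0$. Indeed, $g_\theta=G_\theta+\theta\otimes\theta$ makes $TM=HM\oplus L$ an orthogonal splitting with $g_\theta(\xi,\xi)=1$; since a metric connection that preserves $HM$ also preserves its orthogonal complement $L$, we get $\nabla_X\xi\in L$, and differentiating $g_\theta(\xi,\xi)=1$ kills the coefficient. A second preliminary observation (from $\xi\rfloor d\theta=0$) is that $[\xi,Y]\in HM$ for $Y\in HM$, so $T_\nabla(\xi,Y)=\nabla_\xi Y-[\xi,Y]$ lies in $HM$ and the map $\tau:=T_\nabla(\xi,\cdot)|_{HM}$ is a genuine endomorphism of $HM$; axiom (iii) then reads precisely as ``$\tau$ anticommutes with $J$''.

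For uniqueness I would suppose $\nabla,\nabla'$ both satisfy (i)--(iii) and set $S=\nabla-\nabla'$, a tensor. From the above $S(\cdot,\xi)=0$; from (i) $S(X,\cdot)$ maps $HM\to HM$; from $\nabla g_\theta=0$ it is $g_\theta$-skew, and from $\nabla J=0$ it commutes with $J$. Writing $s(X,Y,Z)=g_\theta(S(X,Y),Z)$ on $HM$, the equality of the $HM$-torsions forced by the first part of (iii) gives the symmetry $s(X,Y,Z)=s(Y,X,Z)$, while metric compatibility gives the skewness $s(X,Y,Z)=-s(X,Z,Y)$; the standard Koszul-type cyclic permutation then yields $s\equiv0$, hence $S=0$ on $HM\times HM$. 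For the last block, $S(\xi,Y)=T_\nabla(\xi,Y)-T_{\nabla'}(\xi,Y)=\tau(Y)-\tau'(Y)$ anticommutes with $J$ (second part of (iii)), yet as a value of $S$ it also commutes with $J$; since $J$ is invertible on $HM$, both conditions together force $S(\xi,Y)=0$. Thus $S\equiv0$ and $\nabla=\nabla'$.

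For existence the cleanest route is the adapted-coframe construction. Locally choose a frame $\{Z_\alpha\}$ of $T_{1,0}M$ with dual $(1,0)$-forms $\theta^\alpha$, so that $\{\theta,\theta^\alpha,\theta^{\bar\alpha}\}$ is a coframe and $d\theta=2i\,h_{\alpha\bar\beta}\,\theta^\alpha\wedge\theta^{\bar\beta}$ for the positive Hermitian matrix $h_{\alpha\bar\beta}=L_\theta(Z_\alpha,Z_{\bar\beta})$. I would then solve for connection $1$-forms $\omega_\beta{}^\alpha$ and torsion forms $\tau^\alpha$ satisfying
\[
d\theta^\alpha=\theta^\beta\wedge\omega_\beta{}^\alpha+\theta\wedge\tau^\alpha,\qquad dh_{\alpha\bar\beta}=\omega_{\alpha\bar\beta}+\omega_{\bar\beta\alpha},\qquad \tau^\alpha=A^\alpha{}_{\bar\beta}\,\theta^{\bar\beta},
\]
with $A_{\alpha\beta}$ symmetric; a direct linear-algebra computation shows this system has a solution. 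Declaring $\nabla Z_\alpha=\omega_\alpha{}^\beta\otimes Z_\beta$ and $\nabla\xi=0$ defines a local connection, and frame-independence follows from the uniqueness already proved, so the local pieces patch to a global $\nabla$. Translating the three structure equations back into invariant form recovers exactly axioms (i)--(iii). (An alternative I would keep in reserve is to write $\nabla=D+B$ for the Levi-Civita connection $D$ of $g_\theta$ and solve algebraically for the correction $B$ that enforces $\nabla\xi=0$, $\nabla J=0$ and the prescribed torsion.)

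I expect the existence step to be the main obstacle. The subtlety is that axiom (iii) prescribes the torsion only on $HM\times HM$ and otherwise constrains $T_\nabla(\xi,\cdot)$ merely up to the anti-$J$ condition, so the construction must be shown to produce a torsion meeting \emph{exactly} these requirements and no others, while simultaneously keeping $g_\theta$ and $J$ parallel. Verifying that the $\omega_\beta{}^\alpha$ solving the displayed system are consistent with both metric compatibility and the symmetry of $A_{\alpha\beta}$, and that the $J$-commuting/anti-commuting dichotomy is respected, is where the care lies; reassuringly, the uniqueness argument guarantees there is a single target to hit, so the existence computation need only confirm that the natural construction lands on it.
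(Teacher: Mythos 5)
The paper does not prove this theorem: it is quoted with a ``cf.~\cite{DT07}'' as the classical existence-and-uniqueness result of Tanaka and Webster, so there is no in-paper argument to measure yours against. On its own merits, your preliminary observations are correct and correctly derived ($\nabla\xi=0$ from metric compatibility plus parallelism of $HM$, and $T_\nabla(\xi,Y)\in HM$ for $Y\in HM$ from $\xi\rfloor d\theta=0$), and your uniqueness argument is complete and airtight: the difference tensor $S=\nabla-\nabla'$ annihilates $\xi$ in its second slot, preserves $HM$, is $g_\theta$-skew and $J$-commuting there, is symmetric on $HM\times HM$ because both connections have the same horizontal torsion $2d\theta(X,Y)\xi$, so the cyclic-permutation trick kills $s$; and $S(\xi,\cdot)$ both commutes with $J$ (from $\nabla J=0$) and anticommutes with $J$ (from the second torsion axiom), hence vanishes. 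This is the standard and correct uniqueness proof.

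The weakness is that the existence half remains a plan rather than a proof. The assertion carrying all the weight --- that the system $d\theta^\alpha=\theta^\beta\wedge\omega_\beta{}^\alpha+\theta\wedge\tau^\alpha$, $dh_{\alpha\bar\beta}=\omega_{\alpha\bar\beta}+\omega_{\bar\beta\alpha}$, $\tau^\alpha=A^\alpha{}_{\bar\beta}\theta^{\bar\beta}$ is solvable --- \emph{is} the existence statement in coframe language, and the ``direct linear-algebra computation'' is never carried out: one must expand $d\theta^\alpha$ in the coframe $\{\theta,\theta^\beta,\theta^{\bar\beta}\}$, make an ansatz for $\omega_\beta{}^\alpha$, and check that the metric-compatibility condition and the normalization of $\tau^\alpha$ can be imposed simultaneously and determine the coefficients. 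Two smaller remarks: the symmetry of $A_{\alpha\beta}$ should not be built into the normalization --- as the paper notes right after the theorem, it is a \emph{consequence} of the axioms, and imposing it a priori adds an extra constraint whose compatibility you would then also have to verify --- and the patching of local solutions via the already-proved uniqueness is a legitimate move but should be phrased as applying uniqueness on chart overlaps. Your fallback route ($\nabla=D+B$ with $D$ the Levi--Civita connection of $g_\theta$ and $B$ solved for algebraically) is equally viable and equally unexecuted. In short: uniqueness is done; existence is a correct outline of Webster's construction with its essential computation missing.
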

The pseudo-Hermitian torsion, denoted by $\tau$, is a $TM$-valued $1$-form defined by $\tau(X)=T_\nabla(\xi,X)$ for any $X\in TM$. Set
\begin{align}
A(X,Y)=g_\theta(\tau(X),Y)
\end{align}
for any $X,Y\in TM$. A pseudo-Hermitian manifold is called a Sasakian manifold if $\tau\equiv0$. Note that the properties of $\nabla$ in Theorem \ref{theorem 2.1} imply that
$\tau(T_{1,0}M)\subset T_{0,1}M$ and $A$ is a trace-free symmetric tensor field.

Suppose that $(M^{2m+1},HM,J,\theta)$ is a real $2m+1$ dimensional pseudo-Hermitian manifold with the Webster metric $g_{\theta}$. Let $(e_i)^m_{i=1}$ be a unitary frame of $T_{1,0}M$ with respect to $g_{\theta}$ and $(\theta^i)^m_{i=1}$ be its dual frame. Then the "horizontal component of  $g_\theta$" may be expressed as
\begin{align}
G_\theta=\sum_{i=1}^m\theta^i\theta^{\bar{i}}.
\end{align}
From \cite{W+78}, we know the following structure equations for the Tanaka-Webster connection:
\begin{gather}
d\theta=2\sqrt{-1}\sum_j\theta^j\wedge\theta^{\bar{j}}\\
d\theta^i=\sum_j\theta^j\wedge\theta^i_j+\theta\wedge\tau^i\label{2.10}\\
d\theta^i_j=\sum_k\theta^k_j\wedge\theta^i_k+\Pi^i_j\label{2.11}\\
\theta^i_j+\theta^{\bar{j}}_{\bar{i}}=0
\end{gather}
where $\theta^i_j$ is the Tanaka-Webster connection 1-form with respect to $(e_i)_{i=1}^n$, $\tau^i=\sum_jA^i_{\bar{j}}\theta^{\bar{j}}=\sum_j g_\theta(\tau e_i, e_j)\theta^{\bar{j}}$, and
\begin{align}
\Pi^i_j=2\sqrt{-1}&\theta^i\wedge\tau^{\bar{j}}-2\sqrt{-1}\tau^i\wedge\theta^{\bar{j}}+\sum_{k,l}R^i_{jk\bar{l}}\theta^k\wedge\theta^{\bar{l}}\notag\\
                  &+\sum_k(W^i_{jk}\theta^k\wedge\theta-W^i_{j\bar{k}}\theta^{\bar{k}}\wedge\theta )\end{align}
where $W^i_{jk}=A^{\bar{k}}_{j,\bar{i}}$, $W^i_{j\bar{k}}=A^i_{\bar{k},j}$, and $R^i_{jk\bar{l}}$ are the components of curvature tensor with respect to Tanaka-Webster connection. Set $R_{i\bar{j}k\bar{l}}=R^j_{ik\bar{l}}$, then we know that
\begin{align}
R_{i\bar{j}k\bar{l}}&=-R_{\bar{j}ik\bar{l}}=-R_{i\bar{j}\bar{l}k}\notag\\
R_{i\bar{j}k\bar{l}}&=R_{k\bar{j}i\bar{l}}=R_{k\bar{l}i\bar{j}}
\end{align}

Suppose $X=\sum_i X^i e_i$ and $Y=\sum_j Y^j e_j$ are two nonzero vectors in $T_{1,0}M$, then the pseudo-Hermitian bisectional curvature determined by $X$ and $Y$ is defined by 
\begin{align}
\frac{\sum_{i, j, k, l}R_{i\bar{j}k\bar{l}}X^iX^{\bar{j}}Y^kY^{\bar{l}}}{(\sum_{i}X^iX^{\bar{i}})(\sum_{j}Y^jY^{\bar{j}})}.
\end{align}
If $X=Y$, the above quantity is referred to as the pseudo-Hermitian sectional curvature in the direction $X$ (cf. \cite{W+78}). The pseudo-Hermitian Ricci tensor is defined as 
\begin{align}
R_{i\bar{j}}=\sum_kR_{k\bar{k}i\bar{j}},
\end{align}
and thus the pseudo-Hermitian scalar curvature is given by 
\begin{align}
R=\sum_iR_{i\bar{i}}.
\end{align}

Analogous to Laplace operator in Riemiannian geometry, there is a degenerate elliptic operator in CR geometry which is called sub-Laplace operator. For a $C^2$ function $u:M\rightarrow \mathbb{R}$, $du$ is a smooth section of $T^*M$. Let $\nabla du$ be the covariant derivative of $du\in\Gamma(T^*M)$ with respect to the Tanaka-Webster connection. Therefore, the sub-Laplace operator can be defined by
\begin{align}
\Delta_b u=tr_H(\nabla du)=\sum_i(u_{i\bar{i}}+u_{\bar{i}i}),
\end{align}
where $u_{i\bar{i}}=(\nabla du)(e_i, e_{\bar{i}})$.

In \cite{CDRZ18}, the authors give a sub-Laplacian comparison theorem in pseudo-Hermitian geometry which plays a similar role as the Laplacian comparison theorem in Riemannian geometry.
\begin{lemma}\label{lemma 2.2}
Let $(M^{2m+1},HM,J,\theta)$ be complete pseudo-Hermitian manifold with pseudo-Hermitian Ricci curvature bounded from below by $-k\leq0$ and $\|A\|_{C^1}\leq k_1$ $(k_1\geq 0)$. Let $x_0$ be a fixed point in $M$. Then for any $x\in M$ which is not in the cut locus of $x_0$, there exists $C=C(m)$ such that
\[\Delta_b \gamma(x)\leq C(1/\gamma+\sqrt{1+k+k_1+k_1^2})\]
where $\gamma(x)$ is Riemannian distance of $g_\theta$ between $x_0$ and $x$ in $M$, and $\|A\|_{C^1}=\max_{y\in M}\{\|A\|(y), \|\nabla A\|(y)\}$.
Moreover, $\Delta_b \gamma$ is uniformly bounded from above when $\gamma\geq1$.
\end{lemma}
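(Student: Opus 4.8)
The plan is to compare the degenerate sub-Laplacian $\Delta_b$ with the full Riemannian Laplacian $\Delta_{g_\theta}$ of the Webster metric and then to invoke the classical Riemannian Laplacian comparison theorem. Since $\gamma$ is the $g_\theta$-distance to $x_0$, away from the cut locus it is smooth with $|\bar\nabla\gamma|_{g_\theta}=1$, where $\bar\nabla$ denotes the Levi-Civita connection of $g_\theta$; hence $\Delta_{g_\theta}\gamma$ is controlled from above once one has a lower bound on the Riemannian Ricci curvature of $g_\theta$. First I would fix a $g_\theta$-orthonormal frame $\{X_1,\dots,X_{2m},\xi\}$ adapted to the splitting $TM=HM\oplus\mathbb{R}\xi$ and write both Laplacians as traces of the respective Hessians. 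Because $\Delta_b$ is the horizontal trace of the Tanaka-Webster Hessian while $\Delta_{g_\theta}$ is the full trace of the Levi-Civita Hessian, their difference splits as $\Delta_b\gamma=\Delta_{g_\theta}\gamma-(\bar\nabla d\gamma)(\xi,\xi)+\sum_\alpha g_\theta(\bar\nabla_{X_\alpha}X_\alpha-\nabla_{X_\alpha}X_\alpha,\bar\nabla\gamma)$, where the last sum is a purely first-order term coming from the difference of the two connections.

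The difference tensor $\bar\nabla-\nabla$ is explicit (cf. \cite{DT07}): it is built from $d\theta$, the complex structure $J$, and the pseudo-Hermitian torsion $\tau$. In the horizontal trace the antisymmetric $d\theta$- and $J$-parts cancel and the $A$-part is trace-free, so the first-order term is controlled by $\|A\|\le k_1$. Next I would bound $\mathrm{Ric}_{g_\theta}$ from below: the Riemannian curvature of $g_\theta$ differs from the Tanaka-Webster curvature by terms quadratic in the torsion $\tau$ and linear in its covariant derivative $\nabla\tau$, so the hypotheses $R_{i\bar j}\ge -k$ and $\|A\|_{C^1}\le k_1$ yield a bound of the form $\mathrm{Ric}_{g_\theta}\ge -C(m)(1+k+k_1+k_1^2)$. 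This is exactly where the quantity $1+k+k_1+k_1^2$ under the square root is produced.

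Setting $\kappa:=C(m)(1+k+k_1+k_1^2)$, the classical Laplacian comparison theorem gives $\Delta_{g_\theta}\gamma\le 2m\sqrt{\kappa}\,\coth(\sqrt{\kappa}\,\gamma)\le C(m)(1/\gamma+\sqrt{\kappa})$, using $\coth t\le 1+1/t$. Combining this with the $O(k_1)$ estimate for the connection-difference term would give $\Delta_b\gamma\le C(m)(1/\gamma+\sqrt{1+k+k_1+k_1^2})-(\bar\nabla d\gamma)(\xi,\xi)$. The final uniform bound for $\gamma\ge1$ then follows at once, since $\coth(\sqrt{\kappa}\,\gamma)$ is bounded once $\gamma\ge1$, absorbing the $1/\gamma$ term into a constant.

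The main obstacle is the Reeb-direction Hessian $(\bar\nabla d\gamma)(\xi,\xi)$, which must be bounded from below so that its negative is bounded from above. A lower bound on $\mathrm{Ric}_{g_\theta}$ alone does not control a single entry of the Hessian of a distance function, so this term cannot be handled by the trace comparison and is the crux of the argument. Here I would exploit the special geometry of the Reeb flow: writing $(\bar\nabla d\gamma)(\xi,\xi)=\xi(\xi\gamma)-(\bar\nabla_\xi\xi)\gamma$ and using that $\bar\nabla_\xi\xi$ is controlled by the torsion (vanishing in the Sasakian case), the problem reduces to estimating the second Reeb derivative of $\gamma$. I expect this to require a Riccati/Jacobi-field analysis along the $g_\theta$-geodesics emanating from $x_0$, in which the Reeb field is propagated and its coupling to the horizontal directions through $J$ and $\tau$ is tracked; alternatively one may bypass the vertical Hessian altogether by running a sub-Riemannian Bochner formula for the horizontal gradient of $\gamma$ and absorbing the $\xi$-derivative terms using the $C^1$-bound on $A$. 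Reconciling the degeneracy of $\Delta_b$ with the Riemannian distance in this way is precisely where the $k_1$ and $k_1^2$ contributions genuinely enter.
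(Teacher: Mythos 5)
First, a point of reference: the paper does not prove Lemma \ref{lemma 2.2} at all --- it is quoted verbatim from \cite{CDRZ18}, so there is no in-paper argument to compare against. Your overall skeleton (write $\Delta_b\gamma$ in terms of the Riemannian Laplacian of the Webster metric, bound $\mathrm{Ric}_{g_\theta}$ below by $-C(m)(1+k+k_1+k_1^2)$ using the curvature and torsion relations between the Tanaka--Webster and Levi-Civita connections, and invoke the classical Laplacian comparison) is the right one and is indeed the strategy of the cited source. A small simplification you missed: since $\mathrm{vol}_{g_\theta}$ is a constant multiple of $\theta\wedge(d\theta)^m$ and $\mathrm{div}_{g_\theta}\xi=0$, one has the exact identity $\Delta u=\Delta_b u+\xi(\xi u)$, so the first-order connection-difference terms you carry along cancel identically and the only discrepancy between the two Laplacians is the vertical second derivative $\xi(\xi\gamma)$.

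That vertical term is precisely where your proposal has a genuine gap, and you say so yourself: everything reduces to a \emph{lower} bound on $(\bar\nabla d\gamma)(\xi,\xi)=\xi(\xi\gamma)$, and you only gesture at "a Riccati/Jacobi-field analysis" or "a sub-Riemannian Bochner formula" without carrying either out. This is not a routine step that can be deferred --- it is the entire content of the lemma beyond the classical comparison theorem. Worse, the first route you suggest points in the wrong direction: the index lemma bounds Hessian entries of a distance function from \emph{above} (by the index form of any admissible extension), while a lower bound on a single entry $\mathrm{Hess}\,\gamma(V,V)$ cannot follow from a Ricci, or even sectional, lower bound for a general unit vector $V$. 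What rescues the argument is that $\xi$ is not a general vector but a globally defined unit field whose covariant derivatives are controlled by the pseudo-Hermitian data: $\bar\nabla\xi$ is expressed through $J$ and $\tau$, so along a minimal geodesic $\sigma$ one gets identities such as $\frac{d}{dt}\bigl[(\xi\gamma)\circ\sigma\bigr]=A(\pi_H\sigma',\pi_H\sigma')$, and differentiating once more brings in $\nabla A$. It is only through such an analysis, propagating $\xi\gamma$ and $\xi(\xi\gamma)$ along the radial geodesics, that the hypothesis $\|A\|_{C^1}\le k_1$ (in particular the bound on $\nabla A$, which your sketch never actually uses except in the Ricci estimate) enters and yields $\xi(\xi\gamma)\ge -C(1/\gamma+\sqrt{1+k+k_1+k_1^2})$. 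Until this estimate is written down, the proof is incomplete at its crux; I would direct you to \cite{CDRZ18} for the full argument.
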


Let $N$ be a Hermitian manifold of complex dimension $n$. Let $(\eta_\alpha)_{\alpha=1}^n$ be a unitary frame field of $N$ and $(\omega^\alpha)_{\alpha=1}^n$ be its coframe field. Then the Hermitian metric $h$ of $N$ is given by
\begin{align}
h=\sum_\alpha\omega^\alpha\omega^{\bar{\alpha}}.
\end{align}
It is well-known that there are connection 1-forms $(\omega^\alpha_\beta)$ such that
\begin{gather}
d\omega^\alpha=\sum_\beta\omega^\beta\wedge\omega^\alpha_\beta+\Omega^\alpha\label{2.20}\\
\omega^\alpha_\beta+\omega^{\bar{\beta}}_{\bar{\alpha}}=0\label{2.21}
\end{gather}
where
\begin{gather}
\Omega^\alpha=\frac{1}{2}\sum_{\beta, \gamma}T^\alpha_{\beta\gamma}\omega^\beta\wedge\omega^\gamma\label{2.22}\\
T^\alpha_{\beta\gamma}=-T^\alpha_{\gamma\beta}.
\end{gather}
Note that this connection is usually called the Chern connection. The curvature forms $(\Omega^\alpha_\beta)$ are defined by
\begin{gather}
\Omega^\alpha_\beta=d\omega^\alpha_\beta-\sum_\gamma\omega^\gamma_\beta\wedge\omega^\alpha_\gamma\label{2.24}
\end{gather}
and according to \eqref{2.21}, we have
\begin{gather}
\Omega^\alpha_\beta=-\Omega^{\bar{\beta}}_{\bar{\alpha}}=\sum_{\gamma, \delta}R^\alpha_{\beta\gamma\bar{\delta}}\omega^\gamma\wedge\omega^{\bar{\delta}}.
\end{gather}
We set $R_{\alpha\bar{\beta}\gamma\bar{\delta}}=R^\beta_{\alpha\gamma\bar{\delta}}$, then the skew-Hermitian symmetry of $\Omega^\alpha_\beta$ is equivalent to $R_{\alpha\bar{\beta}\gamma\bar{\delta}}=R_{\bar{\beta}\alpha\bar{\delta}\gamma}$. If $Z=\sum_\alpha Z^\alpha \eta_\alpha$ and $W=\sum_{\beta} W^\beta \eta_\beta$ are two tangent vectors in $T_{1,0}N$, then the holomorphic bisectional curvature determined by $Z$ and $W$ is defined by
\begin{align}
\frac{\sum_{\alpha\beta\gamma\delta}R_{\alpha\bar{\beta}\gamma\bar{\delta}}Z^\alpha Z^{\bar{\beta}} W^\gamma W^{\bar{\delta}}}{(\sum_{\alpha}Z^\alpha Z^{\bar{\alpha}})(\sum_{\beta}W^\beta W^{\bar{\beta}})}.
\end{align}
If $Z=W$, the above quantity is called the holomorphic sectional curvature in the direction $Z$. The Ricci tensor is defined as 
\begin{align}
R_{\alpha\bar{\beta}}=\sum_\gamma R_ {\gamma\bar{\gamma}\alpha\bar{\beta}}
\end{align}
and the scalar curvature is given by 
\begin{align}
S=\sum_{\alpha}R_{\alpha\bar{\alpha}}.
\end{align}

For a $C^2$ function $v: N\rightarrow\mathbb{R}$ on the Hermitian manifold $N$, the Laplacian of $v$ is defined as the trace of the Hessian matrix of $v$ with respect to the Chern connection $\nabla^c$. In terms of the local frame fields $(\eta_\alpha)_{\alpha=1}^n$,  it is a well-known fact that
\begin{align}
\Delta v=\sum_{\alpha}(v_{\alpha\bar{\alpha}}+v_{\bar{\alpha}\alpha})=2\sum_\alpha v_{\alpha\bar{\alpha}},
\end{align}
where $ v_{\alpha\bar{\alpha}}=(\nabla^c dv)(\eta_\alpha, \eta_{\bar{\alpha}}).$
\section{ Bochner formulas}
This section will derive the Bochner formulas of generalized holomorphic maps between pseudo-Hermitian manifolds and Hermitian manifolds.

In pseudo-Hermitian geometry, there is an analogue of the holomorphic function on a complex manifold, which is called the CR function. 
\begin{definition}[(cf. \cite{DT07})]
Let $(M,HM,J,\theta)$ be a pseudo-Hermitian manifold. A smooth function $f:M\rightarrow \mathbb{C}$ is said to be a CR function, if $Zf=0$ for all $Z\in T_{0,1}M$. Moreover, $f$ is called a basic CR function if the CR function $f$ satisfies $df(\xi)=0$, where $\xi$ is Reeb vector field.
\end{definition}
We know that a smooth map between two complex manifolds is called holomorphic if its differential commutes with the complex structures at each point. Following this idea, one may define generalized holomorphic maps from pseudo-Hermitian manifolds to Hermitian manifolds as follows.
\begin{definition}[(cf. \cite{GIP01} or \cite{CDRY17})]
\label{definition 3}
Let $(M,HM,J,\theta)$ be a pseudo-Hermitian manifold and $(N,J^N)$ be a complex manifold. A smooth map $f: (M,HM,J,\theta)\rightarrow(N,J^N)$ is called a $(J,J^N)$-holomorphic map if it satisfies
\begin{align}
df\circ J=J^N\circ df.\label{3.1}
\end{align}
\end{definition}
\begin{remark}
\begin{enumerate}[(i)]
\item When the target manifold $N$ is complex plane $\mathbb{C}$ endowed with the canonical complex structure $J^{\mathbb{C}}$. A smooth function $f:M\rightarrow \mathbb{C}$ is a $(J,J^{\mathbb{C}})$-holomorphic map if and only if it is a basic CR function, that is, $df(\xi)=0$ and $Zf=0$ for any $Z\in \Gamma(T_{0,1}M)$.
\item The authors in \cite{CDRY17} discussed Siu-Sampson type theorem for $(J,J^N)$-holomorphic maps.
\end{enumerate}

\end{remark}
Let $f: (M,HM,J,\theta)\rightarrow(N,J^N,h)$ be a smooth map with $\dim_{\mathbb{R}} M=2m+1$ and $\dim_{\mathbb{C}}N=n$. Then in terms of local frames in section \ref{sec2}, we can write $df$ as
\begin{align}
df=\sum_{A, B}f^A_B\theta^B\otimes \eta_A,\label{3.2}
\end{align}
where
\[A=1, 2,\ldots n, \bar{1}, \bar{2},\ldots, \bar{n}\]
\[B=0, 1, 2,\ldots m, \bar{1}, \bar{2},\ldots, \bar{m},\]
\[\theta^0=\theta.\]
Assume $f$ is $(J, J^N )$-holomorphic. Clearly the condition \eqref{3.1} leads $f^\alpha_{\bar{i}}=f^{\bar{\alpha}}_i=f^\alpha_0=f^{\bar{\alpha}}_0=0$ and 
\begin{align}
f^*\omega^\alpha=\sum_if^\alpha_i\theta^i.\label{3.3}
\end{align}

For simplification, denote $\hat{\omega}^\alpha_\beta=f^*\omega^\alpha_\beta, \hat{T}^\alpha_{\beta\gamma}=f^*T^\alpha_{\beta\gamma}, \hat{\Omega}^\alpha_\beta=f^*\Omega^\alpha_\beta$, etc.
Taking the exterior derivative of \eqref{3.3} and using \eqref{2.10}, \eqref{2.20}, \eqref{2.22}, we get
\begin{align}
\sum_iDf^\alpha_i\wedge\theta^i=\frac{1}{2}\sum_{\beta,\gamma,i,j}\hat{T}^\alpha_{\beta\gamma}f^\beta_if^\gamma_j\theta^i\wedge\theta^j-\sum_if^\alpha_i\theta\wedge\tau^i\label{3.4}
\end{align}
where
\begin{align}
Df^\alpha_i=df^\alpha_i+\sum_\beta f^\beta_i\hat{\omega}^\alpha_\beta-\sum_jf^\alpha_j\theta^j_i=f^\alpha_{i0}\theta+\sum_k(f^\alpha_{ik}\theta^k+f^\alpha_{i\bar{k}}\theta^{\bar{k}}).\label{3.5}
\end{align}
From \eqref{3.4}, it follows that
\begin{gather}
f^\alpha_{i0}=f^\alpha_{i\bar{j}}=0\label{3.6}\\
f^\alpha_{ij}=f^\alpha_{ji}-\sum_{\beta,\gamma}\hat{T}^\alpha_{\beta\gamma}f^\beta_if^\gamma_j.\label{3.7}
\end{gather}
By \eqref{3.5} and \eqref{3.6}, we have
\begin{align}
df^\alpha_i=\sum_kf^\alpha_{ik}\theta^k+\sum_jf^\alpha_j\theta^j_i-\sum_\beta f^\beta_i\hat{\omega}^\alpha_\beta.\label{3.8}
\end{align}
Taking the exterior derivative of \eqref{3.8}, and applying structure equations \eqref{2.11} and \eqref{2.24}, one finds that
\begin{align}
\sum_k Df^\alpha_{ik}\wedge\theta^k=\sum_\beta f^\beta_i\hat{\Omega}^\alpha_\beta-\sum_j f^\alpha_j\Pi^j_i-\sum_k f^\alpha_{ik}\theta\wedge\tau^k\label{3.9}
\end{align}
where
\[Df^\alpha_{ik}=df^\alpha_{ik}-\sum_j(f^\alpha_{ij}\theta^j_k-f^\alpha_{jk}\theta^j_i)+\sum_\beta f^\beta_{ik}\hat{\omega}^\alpha_\beta=f^\alpha_{ik0}\theta+\sum_j(f^\alpha_{ikj}\theta^j+f^\alpha_{ik\bar{j}}\theta^{\bar{j}}).\]
Hence,
\begin{align}
f^\alpha_{ik\bar{j}}=\sum_lf^\alpha_lR^l_{ik\bar{j}}-\sum_{\beta,\gamma,\delta}f^\beta_if^\gamma_kf^{\bar{\delta}}_{\bar{j}}\hat{R}^\alpha_{\beta\gamma\bar{\delta}}.\label{3.10}
\end{align}
Set
\begin{align}
u=\sum_{\alpha,i}f^\alpha_if^{\bar{\alpha}}_{\bar{i}}.\label{3.11}
\end{align}
Since \eqref{3.6}, the horizontal differential of $u$ is given by
\begin{align*}
d_Hu=\sum_{\alpha,i,k}(f^\alpha_{ik}f^{\bar{\alpha}}_{\bar{i}}\theta^k+f^{\bar{\alpha}}_{\bar{i}\bar{k}}f^\alpha_i\theta^{\bar{k}}),
\end{align*}
thus,
\[u_k=\sum_{\alpha,i}f^\alpha_{ik}f^{\bar{\alpha}}_{\bar{i}}.\]
By computing $d_Hu_k$, we obtain
\begin{align*}
u_{k\bar{k}}=\sum_{\alpha,i}(f^\alpha_{ik\bar{k}}f^{\bar{\alpha}}_{\bar{i}}+|f^\alpha_{ik}|^2).
\end{align*}
Using \eqref{3.10}, we get
\begin{align*}
u_{k\bar{k}}=\sum_{\alpha,i}|f^\alpha_{ik}|^2+\sum_{\alpha,i,l}f^\alpha_lf^{\bar{\alpha}}_{\bar{i}}R^l_{ik\bar{k}}-\sum_{\alpha,\beta,\gamma,\delta,i}f^{\bar{\alpha}}_{\bar{i}}f^\beta_if^\gamma_kf^{\bar{\delta}}_{\bar{k}}\hat{R}^\alpha_{\beta\gamma\bar{\delta}}.
\end{align*}
Hence, we have the following lemma.
\begin{lemma}
Suppose $f: (M,HM,J,\theta)\rightarrow(N,J^N,h)$ be a $(J,J^N)$-holomorphic map, then we have 
\begin{align}
f^*h\leq u G_\theta,\label{2.12}
\end{align}
and
\begin{align}
\frac{1}{2}\Delta_bu=\sum_{\alpha,i,k}|f^\alpha_{ik}|^2+\sum_{\alpha,i,l}f^\alpha_lf^{\bar{\alpha}}_{\bar{i}}R_{i\bar{l}}-\sum_{\alpha,\beta,\gamma,\delta,i,k}f^{\bar{\alpha}}_{\bar{i}}f^\beta_if^\gamma_kf^{\bar{\delta}}_{\bar{k}}\hat{R}_{\beta\bar{\alpha}\gamma\bar{\delta}}.\label{3.13}
\end{align}
\end{lemma}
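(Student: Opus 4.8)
The plan is to treat the two assertions separately: the inequality \eqref{2.12} is a pointwise linear-algebra statement, while the Bochner identity \eqref{3.13} is a trace computation resting on the derivative identities already obtained in the excerpt.

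For \eqref{2.12} I would start from \eqref{3.3}, which shows that $f^*\omega^\alpha=\sum_i f^\alpha_i\theta^i$ involves only the $(1,0)$-forms $\theta^i$; hence $f^*h=\sum_\alpha f^*\omega^\alpha\, f^*\omega^{\bar\alpha}$ is supported on the Levi distribution $HM$ and, restricted to $T_{1,0}M$, is represented by the Hermitian matrix $H_{i\bar j}=\sum_\alpha f^\alpha_i f^{\bar\alpha}_{\bar j}$. Being a Gram matrix, $H$ is positive semidefinite, and its trace is exactly $u=\sum_{\alpha,i}f^\alpha_i f^{\bar\alpha}_{\bar i}$. The decisive elementary fact is then that every eigenvalue of a positive semidefinite Hermitian matrix is bounded above by its trace, so that $H\leq(\operatorname{tr}H)\,\mathrm{Id}=u\,\mathrm{Id}$. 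Testing this against an arbitrary horizontal vector $X=\sum_i X^i e_i$ gives $f^*h(X,\bar X)=\sum_{i,j}H_{i\bar j}X^i\overline{X^j}\leq u\sum_i|X^i|^2=u\,G_\theta(X,\bar X)$, which is precisely \eqref{2.12}.

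For \eqref{3.13} the local computation already carried out in the excerpt provides, for each fixed $k$, the expression
\[
u_{k\bar k}=\sum_{\alpha,i}|f^\alpha_{ik}|^2+\sum_{\alpha,i,l}f^\alpha_l f^{\bar\alpha}_{\bar i}R^l_{ik\bar k}-\sum_{\alpha,\beta,\gamma,\delta,i}f^{\bar\alpha}_{\bar i}f^\beta_i f^\gamma_k f^{\bar\delta}_{\bar k}\hat R^\alpha_{\beta\gamma\bar\delta}.
\]
I would sum this over $k$ and then simplify the curvature terms using the symmetries recorded earlier. From $R^l_{ik\bar k}=R_{i\bar l k\bar k}$ combined with $R_{i\bar l k\bar k}=R_{k\bar k i\bar l}$ one gets $\sum_k R^l_{ik\bar k}=\sum_k R_{k\bar k i\bar l}=R_{i\bar l}$, turning the middle term into the Ricci contraction $\sum_{\alpha,i,l}f^\alpha_l f^{\bar\alpha}_{\bar i}R_{i\bar l}$; likewise the convention $R_{\alpha\bar\beta\gamma\bar\delta}=R^\beta_{\alpha\gamma\bar\delta}$ rewrites $\hat R^\alpha_{\beta\gamma\bar\delta}$ as $\hat R_{\beta\bar\alpha\gamma\bar\delta}$. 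This yields exactly the right-hand side of \eqref{3.13}, up to the overall factor.

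The one genuinely delicate point, and where I expect the main subtlety to lie, is the passage from $\sum_k u_{k\bar k}$ to $\tfrac12\Delta_b u$. Since $\Delta_b u=\sum_k(u_{k\bar k}+u_{\bar k k})$, one must verify that $\sum_k u_{\bar k k}=\sum_k u_{k\bar k}$. As $u$ is real, conjugation gives $u_{\bar k k}=\overline{u_{k\bar k}}$, so it suffices to observe that the expression just computed for $\sum_k u_{k\bar k}$ is manifestly real (a sum of squared norms, a real Ricci term, and a Hermitian curvature term); hence $\Delta_b u=2\sum_k u_{k\bar k}$ and \eqref{3.13} follows. Equivalently, one may note that the commutation defect $u_{k\bar k}-u_{\bar k k}$ equals $-du\bigl(T_\nabla(e_k,e_{\bar k})\bigr)$, a multiple of $du(\xi)$, and that $\xi(u)=0$ because \eqref{3.6} forces the $\theta$-component $f^\alpha_{i0}$ of $Df^\alpha_i$ to vanish; either route supplies the factor of two and completes the proof.
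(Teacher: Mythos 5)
Your proposal is correct and follows essentially the same route as the paper: the identity \eqref{3.13} is obtained exactly as in the text by summing the displayed formula for $u_{k\bar k}$ over $k$ and invoking the curvature symmetries, while \eqref{2.12} is the standard Gram-matrix comparison. You in fact supply two details the paper leaves implicit --- the eigenvalue-versus-trace argument for \eqref{2.12} and the justification of the factor $\tfrac12$ via the reality of $\sum_k u_{k\bar k}$ (equivalently $\xi(u)=0$, which follows from $f^\alpha_{i0}=0$) --- and both are handled correctly.
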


In the rest of this section, we turn to generalized holomorphic maps from Hermitian manifolds to pseudo-Hermitian manifolds.
\begin{definition}
\label{definition 4}\label{definition 4}
Let $(M, HM, J, \theta)$ be a pseudo-Hermitian manifold and $(N, J^N)$ be a complex manifold. A  smooth map $g: N\rightarrow M$ is called  a $(J^N, J)$-holomorphic map if it satisfies
\begin{align}
dg_H\circ J^N=J\circ dg_H,\label{3.14}
\end{align}
where $dg_H=\pi_H\circ dg$, $\pi_H: TM\rightarrow HM$ is the natural projection. Moreover,  the $(J^N, J)$-holomorphic map $g$ is said to be horizontally constant if $ dg_H\equiv 0$.
\end{definition}
\begin{remark}
Every $(J^N, J)$-holomorphic map $g: N\rightarrow M$ satisfying $dg(TN)\subseteq HM$, i.e.,  $dg\circ J^N=J\circ dg$, is constant. Indeed, if not, there exists a local vector field $e\in T^{1,0}N$ such that $dg(e)\neq 0$, so $HM \ni dg[e, \bar{e}]=[dg(e), dg(\bar{e})]\notin HM$, which leads to a contradiction.
\end{remark}
Suppose $(N, J^N )$ is a complex manifold and $(M, HM, J, \theta)$ is a pseudo-Hermitian manifold. Let $g : N\rightarrow M$ be a $(J^N , J )$-holomorphic map. Under the local frames in Section \ref{sec2}, we can express the differential of $g$ as 
\begin{align}
dg=\sum_{A,B}g^B_A\omega^A\otimes e_B,
\end{align}
where $e_0=\xi$ and the values of $A, B$ are the same as those of \eqref{3.2}. Clearly, the condition  \eqref{3.14} in Definition \ref{definition 4} is equivalent to
\begin{align}
g^i_{\bar{\alpha}}=g^{\bar{i}}_\alpha=0.\label{3.16}
\end{align}
From \eqref{3.14} and \eqref{3.16}, we have
\begin{gather}
g^*\theta^i=\sum_{\alpha} g^i_\alpha\omega^\alpha\label{3.17}\\
g^*\theta=\sum_{\alpha}(g^0_\alpha\omega^\alpha+g^0_{\bar{\alpha}}\omega^{\bar{\alpha}}).
\end{gather}

To simplify the notations, we set $\hat{\theta}^i_j=g^*\theta^i_j, \hat{A}^i_j=g^*A^i_j, \hat{R}^i_{j\bar{k}l}=R^i_{j\bar{k}l}$, etc. By taking the exterior derivative of \eqref{3.17} and using the structure equations in $M$ and $N$,  we get
\begin{align}
\sum_{\alpha}Dg^i_\alpha\wedge\omega^\alpha+\sum_\alpha g^i_\alpha\Omega^\alpha-\sum_{\alpha,\beta, j}g^0_\alpha g^{\bar{j}}_{\bar{\beta}}\hat{A}^i_{\bar{j}}\omega^\alpha\wedge\omega^{\bar{\beta}}-\sum_{\alpha,\beta, j}g^0_{\bar{\alpha}}g^{\bar{j}}_{\bar{\beta}}\hat{A}^i_{\bar{j}}\omega^{\bar{\alpha}}\wedge\omega^{\bar{\beta}}=0,\label{3.19}
\end{align}
where
\begin{align}
Dg^i_\alpha=dg^i_\alpha-\sum_\gamma g^i_\gamma\omega^\gamma_\alpha+\sum_jg^j_\alpha\hat{\theta}^i_j=\sum_\beta (g^i_{\alpha\beta}\omega^{\beta}+ g^i_{\alpha\bar{\beta}}\omega^{\bar{\beta}}).\label{3.20}
\end{align}
Then \eqref{3.19} gives 
\begin{gather}
g^i_{\alpha\beta}=g^i_{\beta\alpha}-\sum_\gamma g^i_\gamma T^\gamma_{\beta\alpha}\notag\\
g^i_{\alpha\bar{\beta}}=-\sum_j g^0_\alpha g^{\bar{j}}_{\bar{\beta}}\hat{A}^i_{\bar{j}}\label{3.21}\\
\sum_j (g^0_{\bar{\alpha}}g^{\bar{j}}_{\bar{\beta}}-g^0_{\bar{\beta}}g^{\bar{j}}_{\bar{\alpha}})\hat{A}^i_{\bar{j}}=0\notag
\end{gather}

Taking the exterior derivative of \eqref{3.20} and using the structure equations again, we obtain
\begin{align}
\sum_\beta Dg^i_{\alpha\beta}\wedge\omega^{\beta}+\sum_\beta Dg^i_{\alpha\bar{\beta}}\wedge\omega^{\bar{\beta}}=-\sum_\gamma g^i_{\gamma}\Omega^\gamma_\alpha+\sum_j g^j_\alpha g^*\Pi^i_j,\label{3.22}
\end{align}
where
\begin{gather}
Dg^i_{\alpha\beta}=dg^i_{\alpha\beta}-\sum_\gamma (g^i_{\alpha\gamma}\omega^\gamma_\beta+g^i_{\gamma\beta}\omega^\gamma_\alpha)+\sum_j g^j_{\alpha\beta}\hat{\theta}^i_j=\sum_\gamma (g^i_{\alpha\beta\gamma}\omega^\gamma+g^i_{\alpha\beta\bar{\gamma}}\omega^{\bar{\gamma}})\\
Dg^i_{\alpha\bar{\beta}}=dg^i_{\alpha\bar{\beta}}-\sum_\gamma (g^i_{\alpha\bar{\gamma}}\omega^{\bar{\gamma}}_{\bar{\beta}}+g^i_{\gamma\bar{\beta}}\omega^\gamma_\alpha)+\sum_j g^j_{\alpha\bar{\beta}}\hat{\theta}^i_j=\sum_\gamma(g^i_{\alpha\bar{\beta}\gamma}\omega^\gamma+g^i_{\alpha\bar{\beta}\bar{\gamma}}\omega^{\bar{\gamma}}).
\end{gather}
Compare the $(1,1)$-form of  \eqref{3.22} and get
\begin{align}
g^i_{\alpha\beta\bar{\gamma}}=g^i_{\alpha\bar{\gamma}\beta}+\sum_\delta g^i_\delta R^\delta_{\alpha\beta\bar{\gamma}}-\sum_{j,k,l}g^j_\alpha g^k_\beta g^{\bar{l}}_{\bar{\gamma}}\hat{R}^i_{jk\bar{l}}-\sum_{j,k}(g^j_\alpha g^k_\beta g^0_{\bar{\gamma}}\hat{W}^i_{jk}+g^j_\alpha g^0_\beta g^{\bar{k}}_{\bar{\gamma}}\hat{W}^i_{j\bar{k}}).\label{3.25}
\end{align}
Set 
\begin{align}
v=\sum_{i, \alpha}g^i_\alpha g^{\bar{i}}_{\bar{\alpha}}.\label{3.26}
\end{align}
By direct computation, we have
\begin{align}
\frac{1}{2}\Delta v=\sum_{i,\alpha,\gamma}(|g^i_{\alpha\gamma}|^2+|g^i_{\alpha\bar{\gamma}}|^2)+\sum_{i,\alpha,\gamma}(g^{\bar{i}}_{\bar{\alpha}}g^i_{\alpha\gamma\bar{\gamma}}+g^i_\alpha g^{\bar{i}}_{\bar{\alpha\gamma\bar{\gamma}}}).\label{3.28}
\end{align}
Using \eqref{3.21} and \eqref{3.25}, we perform the following computation
\begin{align}
g^i_{\alpha\gamma\bar{\gamma}}&=g^i_{\gamma\alpha{\bar{\gamma}}}-(\sum_\beta g^i_{\beta}T^\beta_{\gamma\alpha})_{\bar{\gamma}}\notag\\
						&=g^i_{\gamma\bar{\gamma}\alpha}+\sum_\delta g^i_\delta R^\delta_{\gamma\alpha\bar{\gamma}}-\sum_{j,k,l}g^j_\gamma g^k_\alpha g^{\bar{l}}_{\bar{\gamma}}\hat{R}^i_{jk\bar{l}}-(\sum_\beta g^i_\beta T^\beta_{\gamma\alpha})_{\bar{\gamma}}\notag\\
						&\  \  \   -\sum_{j,k}(g^j_\gamma g^k_\alpha g^0_{\bar{\gamma}}\hat{W}^i_{jk}+g^j_\gamma g^0_\alpha g^{\bar{k}}_{\bar{\gamma}}\hat{W}^i_{j\bar{k}})\label{3.29}
\end{align}
and
\begin{align}
g^{\bar{i}}_{\bar{\alpha}\gamma\bar{\gamma}}=-(\sum_j g^0_{\bar{\alpha}}g^j_\gamma \hat{A}^{\bar{i}}_j)_{\bar{\gamma}}.\label{3.30}
\end{align}
 From \eqref{3.21}, \eqref{3.28}, \eqref{3.29}, \eqref{3.30}, we obtain the Bochner formula as follows.
 \begin{lemma}
 Suppose $f: (N, J^N, h)\rightarrow (M, HM, J, \theta)$ is a $(J^N,J)$-holomorphic map, then we have
 \begin{align}
  g^*G_\theta \leq vh, \label{3.30'}
 \end{align}
 and
\begin{align}
\frac{1}{2}\Delta v=\sum_{i,\alpha,\gamma}&(|g^i_{\alpha\gamma}|^2+|g^i_{\alpha\bar{\gamma}}|^2)+\sum_{i,\alpha,\delta}g^{\bar{i}}_{\bar{\alpha}}g^i_\delta R_{\alpha\bar{\delta}}-\sum_{i,j,k,l,\alpha,\gamma}g^{\bar{i}}_{\bar{\alpha}}g^j_\gamma g^k_\alpha g^{\bar{l}}_{\bar{\gamma}}\hat{R}^i_{jk\bar{l}}\notag\\
			-&\sum_{i,j,k,\alpha,\gamma}(g^{\bar{i}}_{\bar{\alpha}}g^j_\gamma g^k_\alpha g^0_{\bar{\gamma}}\hat{W}^i_{jk}+g^{\bar{i}}_{\bar{\alpha}}g^j_\gamma g^0_\alpha g^{\bar{k}}_{\bar{\gamma}}\hat{W}^i_{j\bar{k}})\notag\\
			-\sum_{i,\alpha,\gamma}&\{g^{\bar{i}}_{\bar{\alpha}}(\sum_j g^0_\gamma g^{\bar{j}}_{\bar{\gamma}}\hat{A}^i_{\bar{j}})_\alpha+g^i_\alpha(\sum_j g^0_{\bar{\alpha}}g^j_\gamma \hat{A}^{\bar{i}}_j)_{\bar{\gamma}}\}-\sum_{i,\alpha,\gamma}g^{\bar{i}}_{\bar{\alpha}}(\sum_\beta g^i_\beta T^\beta_{\gamma\alpha})_{\bar{\gamma}}\label{3.31}
\end{align}
\end{lemma}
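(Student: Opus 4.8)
The plan is to treat the two assertions separately: \eqref{3.30'} is a pointwise algebraic comparison of Hermitian forms, while \eqref{3.31} is obtained by feeding the commutation identities already derived above into the rough expression \eqref{3.28} for $\Delta v$.

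First I would establish the inequality \eqref{3.30'}. Using \eqref{3.17}, the pullback $g^*G_\theta=g^*\!\left(\sum_i\theta^i\theta^{\bar i}\right)$ becomes the Hermitian form $\sum_{i,\alpha,\beta} g^i_\alpha g^{\bar i}_{\bar\beta}\,\omega^\alpha\omega^{\bar\beta}$ on $N$, whose coefficient matrix $P_{\alpha\bar\beta}=\sum_i g^i_\alpha g^{\bar i}_{\bar\beta}$ is Hermitian and positive semi-definite with $\mathrm{tr}\,P=\sum_{i,\alpha} g^i_\alpha g^{\bar i}_{\bar\alpha}=v$. For any $Z=\sum_\alpha Z^\alpha\eta_\alpha$ the Cauchy--Schwarz inequality gives $\sum_i\big|\sum_\alpha g^i_\alpha Z^\alpha\big|^2\le v\sum_\alpha|Z^\alpha|^2$, i.e. the largest eigenvalue of $P$ is at most its trace; hence $g^*G_\theta\le vh$, exactly as in the derivation of \eqref{2.12}.

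For the Bochner formula I would start from \eqref{3.28}, which already isolates the nonnegative gradient terms $\sum_{i,\alpha,\gamma}(|g^i_{\alpha\gamma}|^2+|g^i_{\alpha\bar\gamma}|^2)$ and leaves the third-order contraction $\sum_{i,\alpha,\gamma}(g^{\bar i}_{\bar\alpha}g^i_{\alpha\gamma\bar\gamma}+g^i_\alpha g^{\bar i}_{\bar\alpha\gamma\bar\gamma})$ to be converted into curvature and torsion. The commutation identity \eqref{3.29} rewrites $g^i_{\alpha\gamma\bar\gamma}$, after interchanging the order of the last two covariant derivatives, as the trace term $g^i_{\gamma\bar\gamma\alpha}$ plus the domain curvature $\sum_\delta g^i_\delta R^\delta_{\gamma\alpha\bar\gamma}$, the pulled-back target curvature $\sum_{j,k,l} g^j_\gamma g^k_\alpha g^{\bar l}_{\bar\gamma}\hat R^i_{jk\bar l}$, the Chern-torsion term $(\sum_\beta g^i_\beta T^\beta_{\gamma\alpha})_{\bar\gamma}$ of $N$, and the two $\hat W$-terms coming from the Reeb direction of $M$; the conjugate identity \eqref{3.30} handles $g^{\bar i}_{\bar\alpha\gamma\bar\gamma}$. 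The remaining work is to contract these expressions against $g^{\bar i}_{\bar\alpha}$ and $g^i_\alpha$ and sum over $\gamma$. Summing $\sum_\delta g^i_\delta R^\delta_{\gamma\alpha\bar\gamma}$ over $\gamma$ and using the curvature symmetries produces the Ricci term $\sum_{i,\alpha,\delta} g^{\bar i}_{\bar\alpha}g^i_\delta R_{\alpha\bar\delta}$, while the trace term is handled by noting that $g^i_{\gamma\bar\gamma}$ is governed by \eqref{3.21}, so that $\sum_\gamma g^i_{\gamma\bar\gamma\alpha}=-\big(\sum_{j,\gamma} g^0_\gamma g^{\bar j}_{\bar\gamma}\hat A^i_{\bar j}\big)_\alpha$ and similarly for its conjugate; these are precisely the $\hat A$-terms on the last line of \eqref{3.31}. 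Collecting all contributions yields \eqref{3.31}.

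I expect the main obstacle to be the careful bookkeeping of the \emph{two} sources of torsion that have no analogue in the classical holomorphic Schwarz lemma: the pseudo-Hermitian torsion $\hat A$ of the target $M$, which survives precisely because the mixed Hessian $g^i_{\alpha\bar\beta}$ need not vanish (see \eqref{3.21}), and the Chern torsion $T^\beta_{\gamma\alpha}$ of the possibly non-K\"ahler domain $N$. Keeping track of the Reeb-direction components $g^0_\alpha,g^0_{\bar\alpha}$ that these torsion terms carry, and verifying that the contraction over $\gamma$ of the domain curvature indeed collapses to the Ricci tensor $R_{\alpha\bar\delta}$, are the delicate points; by contrast the derivation of \eqref{3.30'} and the extraction of the gradient terms from \eqref{3.28} are routine.
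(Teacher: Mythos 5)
Your proposal is correct and follows essentially the same route as the paper: the inequality \eqref{3.30'} is the standard trace-versus-largest-eigenvalue comparison for the positive semi-definite Hermitian form $\sum_i g^i_\alpha g^{\bar i}_{\bar\beta}$ obtained from \eqref{3.17}, and the Bochner formula is obtained exactly as in the text by substituting the commutation identities \eqref{3.29}--\eqref{3.30} into \eqref{3.28} and using \eqref{3.21} to convert the trace term $\sum_\gamma g^i_{\gamma\bar\gamma\alpha}$ into the $\hat A$-terms. Your identification of the two torsion sources and of where each term of \eqref{3.31} originates matches the paper's computation.
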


\section{Schwarz type lemma for $(J,J^N)$-holomorphic maps}\label{sec4}
In this section, we will establish the Schwarz type lemma for $(J,J^N)$-holomorphic maps. As the corollaries of this lemma, the Liouville theorem and little Picard theorem for basic CR functions are given.

Suppose that $(M,HM,J,\theta)$ is a complete pseudo-Hermitian manifold and $(N,J^N,h)$ is a Hermitian manifold. Let $f: M\rightarrow N$ be a $(J,J^N)$-holomorphic map. To give the Schwarz type lemma for $f$, it is sufficient to estimate the upper bound of $u$ defined by \eqref{3.11} due to \eqref{2.12}. Define
\begin{align}
\phi(x)=(a^2-\gamma^2(x))^2u
\end{align}
where $\gamma(x)$ is the Riemannian distance from a fixed point $z$ to $x$ in $M$. Let $B_a(z)$ be a open geodesic ball in $M$ with its center at $z$ and radius of $a$. It is obvious that $\phi(x)$ attains its maximum in $B_a(z)$. Suppose $x_0$ is a maximum point. For maximum principle, $\gamma$ is required to be twice differentiable near $x_0$. This may be remedied by the following consideration(cf. \cite{CCL79}): Let $\tau:[0, \gamma(x_0)]\rightarrow M$ be a minimizing geodesic joining $z$ and $x_0$ such that $\tau(0)=z$ and $\tau(\gamma(x_0))=x_0$. If $x_0$ is a cut point of $z$, then for a small number $\varepsilon>0$, $x_0$ is not a conjugate point of $\tau(\varepsilon)$ along $\tau$. It is well-known that there is a cone $\mathfrak{C}$ with its vertex at $\tau(\varepsilon)$ and containing a neighborhood of $x_0$. Let $\gamma_\varepsilon(x)$ denote the Riemannian distance from $\tau(\varepsilon)$ to $x$, then $\gamma_\varepsilon$ is smooth near $x_0$. Let $\tilde{\gamma}(x)=\varepsilon+\gamma$, then we have $\gamma\leq\tilde{\gamma}$ and the equality holds at $x_0$. So we can consider the function $(a^2-\tilde{\gamma}^2)^2u$, it also attains the maximum at $x_0$. Let $\varepsilon\rightarrow 0$, we may assume that $\gamma$ is smooth near $x_0$. Therefore, applying the maximum principle to $\phi$, at $x_0$, we have
\begin{gather}
\frac{\nabla^H u}{u}=-2\frac{\nabla^H(a^2-\gamma^2)}{a^2-\gamma^2}\label{4.2}\\
\frac{\Delta_bu}{u}+4\frac{\nabla^H(a^2-\gamma^2)}{(a^2-\gamma^2)}\cdot\frac{\nabla^Hu}{u}+2\frac{\Delta_b(a^2-\gamma^2)}{a^2-\gamma^2}+2\frac{\|\nabla^H(a^2-\gamma^2)\|^2}{(a^2-\gamma^2)^2}\leq0\label{4.3}
\end{gather}
where the inner product $\cdot$ and the norm $\|\cdot\|$ is induced by the Webster metric $g_\theta$.
Substituting \eqref{4.2} into \eqref{4.3}, we obtain
\begin{align}
\frac{\Delta_bu}{u}-6\frac{\|\nabla^H(a^2-\gamma^2)\|^2}{(a^2-\gamma^2)^2}-2\frac{\Delta_b\gamma^2}{a^2-\gamma^2}\leq0.\label{4.4}
\end{align}
Let $-K_1$ be the greatest lower bound of the pseudo-Hermitian Ricci curvature of $M$, and $-K_2$ be the least upper bound of the holomorphic bisectional curvature of $N$. Then it follows from \eqref{3.13} that
\begin{align}
\frac{1}{2}\Delta_bu\geq-K_1u+K_2u^2.\label{4.5}
\end{align}
Using $\|\nabla^H\gamma\|\leq1$, we get
\begin{align}
\|\nabla^H(a^2-\gamma^2)\|^2=\|2\gamma\nabla^H\gamma\|^2\leq 4a^2.\label{4.6}
\end{align}
If $\|A\|_{C^1}$ is bounded from above on $M$, then by Lemma \ref{lemma 2.2}, we have
\begin{align}
\Delta_b\gamma^2=2\gamma\Delta_b\gamma+2\|\nabla^H\gamma\|^2\leq C(1+a)\label{4.7}
\end{align}
where $C$ is a positive constant independent of $a$. Suppose that $K_1\geq0$ and $K_2>0$. Then substituting \eqref{4.5}, \eqref{4.6} and \eqref{4.7} into \eqref{4.4}, we obtain
\[u(x_0)\leq\frac{K_1}{K_2}+\frac{12a^2}{K_2(a^2-\gamma^2(x_0))^2}+\frac{C(1+a)}{K_2(a^2-\gamma^2(x_0))}. \]
Thus,
\begin{align*}
(a^2-\gamma^2(x))^2u(x)&\leq(a^2-\gamma^2(x_0))^2u(x_0)\\
                      &\leq\frac{K_1}{K_2}(a^2-\gamma^2(x_0))^2+\frac{12a^2}{K_2}+\frac{C(1+a)}{K_2}(a^2-\gamma^2(x_0))\\
                      &\leq\frac{K_1}{K_2}a^4+\frac{12a^2}{K_2}+\frac{C(1+a)}{K_2}a^2
\end{align*}
for any $x\in B_a(z)$.
It follows that
\[u(x)\leq\frac{K_1a^4}{K_2(a^2-\gamma^2(x))^2}+\frac{12a^2}{K_2(a^2-\gamma^2(x))^2}+\frac{C(1+a)a^2}{K_2(a^2-\gamma^2(x))^2}.\]
Let $a\rightarrow\infty$, we deduce that
\[\sup_M u\leq \frac{K_1}{K_2}.\]
Due to \eqref{2.12}, we obtain
\[f^*h\leq\frac{K_1}{K_2}G_\theta.\]

Therefore, we have the Schwarz type lemma as follows:
\begin{theorem}\label{theorem 4.1}
Let $(M^{2m+1},HM,J,\theta)$ be a complete pseudo-Hermitian manifold with pseudo-Hermitian Ricci curvature bounded from below by $-K_1\leq0$ and $\|A\|_{C^1}$ bounded from above. Let $(N^n,J^N,h)$ be a Hermitian manifold with holomorphic bisectional curvature bounded from above by $-K_2<0$. Then for any $(J,J^N)$-holomorphic map $f: M\rightarrow N$, we have
\begin{align}
f^*h\leq\frac{K_1}{K_2}G_\theta.\label{4.8}
\end{align}
In particular, if $K_1=0$, every $(J,J^N)$-holomorphic map from $M$ into $N$ is constant.
\end{theorem}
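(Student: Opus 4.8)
The plan is to reduce the desired metric inequality to a pointwise bound on the single function $u=\sum_{\alpha,i}f^\alpha_if^{\bar\alpha}_{\bar i}$ from \eqref{3.11}. Since \eqref{2.12} already gives $f^*h\leq uG_\theta$, it suffices to prove $\sup_M u\leq K_1/K_2$. The driving tool is the Bochner formula \eqref{3.13}. First I would discard the nonnegative term $\sum_{\alpha,i,k}|f^\alpha_{ik}|^2$, and then bound the two curvature terms using the hypotheses: the lower Ricci bound $-K_1$ on $M$ gives $\sum_{\alpha,i,l}f^\alpha_lf^{\bar\alpha}_{\bar i}R_{i\bar l}\geq -K_1u$ (viewing $(f^\alpha_i)_i$ as a vector and using $R_{i\bar l}\geq -K_1\delta_{il}$ in a unitary frame), while the upper bound $-K_2$ on the holomorphic bisectional curvature of $N$, applied for each fixed $i,k$ to the pair $\sum_\beta f^\beta_i\eta_\beta$ and $\sum_\gamma f^\gamma_k\eta_\gamma$, gives $-\sum_{\alpha,\beta,\gamma,\delta,i,k}\hat R_{\beta\bar\alpha\gamma\bar\delta}f^{\bar\alpha}_{\bar i}f^\beta_if^\gamma_kf^{\bar\delta}_{\bar k}\geq K_2u^2$. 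Summing, I obtain the differential inequality \eqref{4.5}, namely $\tfrac12\Delta_bu\geq -K_1u+K_2u^2$.

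Because $M$ is only complete, not compact, $u$ may fail to attain its supremum, so the maximum principle cannot be applied to $u$ directly. I would instead localize on a geodesic ball $B_a(z)$ via the cutoff $\phi=(a^2-\gamma^2)^2u$, where $\gamma$ is the $g_\theta$-distance to a fixed point $z$; since $\phi$ vanishes on $\partial B_a(z)$ it has an interior maximum at some $x_0$, where the first- and second-derivative tests give \eqref{4.2} and \eqref{4.3}, and substitution produces \eqref{4.4}. The crucial input is the control of $\Delta_b\gamma^2$ at $x_0$, which I would obtain from the sub-Laplacian comparison theorem (Lemma \ref{lemma 2.2}); this is exactly where both hypotheses, Ricci bounded below and $\|A\|_{C^1}$ bounded, are consumed, and it yields $\Delta_b\gamma^2\leq C(1+a)$ with $C$ independent of $a$. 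Feeding this together with $\|\nabla^H(a^2-\gamma^2)\|^2\leq 4a^2$ and \eqref{4.5} into \eqref{4.4} gives a quadratic inequality for $u(x_0)$; solving it, bounding $\phi$ over all of $B_a(z)$, and finally letting $a\to\infty$ forces $\sup_M u\leq K_1/K_2$, which is \eqref{4.8}.

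The main obstacle is the familiar lack of smoothness of $\gamma$ along the cut locus of $z$, which would invalidate the second-order test at $x_0$. I would dispose of this by Calabi's trick: replace $\gamma$ by $\tilde\gamma=\varepsilon+\gamma_\varepsilon$ for the distance $\gamma_\varepsilon$ from a nearby interior point $\tau(\varepsilon)$ of a minimizing geodesic, so that $\tilde\gamma$ is smooth near $x_0$ with $\gamma\leq\tilde\gamma$ and equality at $x_0$, run the argument with $(a^2-\tilde\gamma^2)^2u$, and let $\varepsilon\to0$. A second point worth checking is that the comparison bound is only linear in $a$, so after dividing by $(a^2-\gamma^2)$ the troublesome term is $O(1/a)$ and disappears in the limit, which is what makes the scheme close. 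Finally, when $K_1=0$ the estimate gives $u\equiv 0$, hence $f^\alpha_i=0$ for all $\alpha,i$; combined with the holomorphy relations $f^\alpha_{\bar i}=f^{\bar\alpha}_i=f^\alpha_0=f^{\bar\alpha}_0=0$ this yields $df=0$, so $f$ is constant.
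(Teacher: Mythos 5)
Your proposal is correct and follows essentially the same route as the paper: the Bochner inequality \eqref{4.5} from \eqref{3.13}, the localized test function $(a^2-\gamma^2)^2u$ with Calabi's trick at the cut locus, the sub-Laplacian comparison bound $\Delta_b\gamma^2\leq C(1+a)$ from Lemma \ref{lemma 2.2}, and the limit $a\to\infty$. No gaps; the concluding observation that $u\equiv 0$ together with the holomorphy relations forces $df=0$ when $K_1=0$ is also exactly the paper's argument.
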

\begin{remark}\label{rk3}
Using Royden's lemma (cf. \cite{Roy80}), we can weaken the hypothesis on $N$ by assuming the holomorphic sectional curvature of $N$ is bounded above by $-K_2<0$, but the constant $\frac{K_1}{K_2}$ in \eqref{4.8} will be replaced by $\frac{2v}{v+1}\frac{K_1}{K_2}$, where $v$ is the maximal rank of $df$.
\end{remark}
Since a unit disk in complex plane equipped with Poincar\'{e} metric is a K\"{a}hler manifold with constant negative holomorphic curvature, we have the following Liouville theorem:
\begin{corollary}\label{corollary 4.2}
Let $(M,HM,J,\theta)$ be a complete pseudo-Hermitian manifold with non-negative pseudo-Hermitian Ricci curvature and $\|A\|_{C^1}$ bounded from above. Any bounded basic CR function on $M$ is constant.
\end{corollary}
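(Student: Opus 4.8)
The plan is to realize a bounded basic CR function as a $(J,J^{\mathbb{C}})$-holomorphic map into a negatively curved target, so that the $K_1=0$ conclusion of Theorem \ref{theorem 4.1} applies. By the Remark following Definition \ref{definition 3}, a basic CR function $f:M\to\mathbb{C}$ is precisely a $(J,J^{\mathbb{C}})$-holomorphic map into $\mathbb{C}$ with its standard complex structure. Since the flat metric on $\mathbb{C}$ has vanishing holomorphic bisectional curvature, Theorem \ref{theorem 4.1} cannot be invoked directly against the target $\mathbb{C}$; it is precisely the boundedness of $f$ that allows us to replace the flat target by a hyperbolic one.

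First I would exploit boundedness. Let $R=\sup_M|f|<\infty$ and choose $R'>R$, so that $f(M)\subset D_{R'}$, the open disc of radius $R'$ in $\mathbb{C}$. Composing with the holomorphic affine map $w\mapsto w/R'$ produces $\tilde f=f/R':M\to D$ into the unit disc $D$. Because $w\mapsto w/R'$ is holomorphic, it preserves the defining relation $d\tilde f\circ J=J^{\mathbb{C}}\circ d\tilde f$, so $\tilde f$ is again $(J,J^{\mathbb{C}})$-holomorphic, now with image in $D$.

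Next I would endow $D$ with the Poincar\'e metric. As recalled in the text, $(D,\text{Poincar\'e})$ is a complete K\"ahler, hence Hermitian, manifold of constant negative holomorphic sectional curvature, so its holomorphic bisectional curvature is bounded above by some $-K_2<0$. Viewing $\tilde f$ as a $(J,J^{D})$-holomorphic map $M\to(D,\text{Poincar\'e})$, all hypotheses of Theorem \ref{theorem 4.1} hold: $M$ is complete with pseudo-Hermitian Ricci curvature bounded below by $0=-K_1$ and $\|A\|_{C^1}$ bounded, while the target has holomorphic bisectional curvature $\le-K_2<0$. The $K_1=0$ case of Theorem \ref{theorem 4.1} then forces $\tilde f$ to be constant, whence $f=R'\tilde f$ is constant as well.

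The argument requires no new estimates beyond Theorem \ref{theorem 4.1}; the only points needing care are the change of target and the extraction of constancy. For the former, one notes that $D\subset\mathbb{C}$ carries the same complex structure $J^{\mathbb{C}}$, so the condition \eqref{3.1} is unchanged when the codomain is restricted, and $\tilde f$ remains $(J,J^{D})$-holomorphic. For the latter, recall that the proof of Theorem \ref{theorem 4.1} in fact establishes $\sup_M u\le K_1/K_2$, which equals $0$ here; since $u=\sum_{\alpha,i}f^\alpha_i f^{\bar\alpha}_{\bar i}\ge0$ by \eqref{3.11}, this yields $u\equiv0$, hence every $\tilde f^\alpha_i=0$. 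Combined with $d\tilde f(\xi)=0$ (the basic condition) and the $(J,J^{\mathbb{C}})$-holomorphicity relations $\tilde f^\alpha_{\bar i}=\tilde f^\alpha_0=0$, this gives $d\tilde f\equiv0$, so $\tilde f$, and therefore $f$, is constant on the connected manifold $M$. The principal conceptual step is simply recognizing the Poincar\'e disc as the correct negatively curved receptacle for the bounded image.
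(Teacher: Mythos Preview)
Your proposal is correct and takes essentially the same approach as the paper: the paper's entire justification is the single observation that the unit disc with the Poincar\'e metric is a K\"ahler manifold of constant negative holomorphic curvature, after which Theorem \ref{theorem 4.1} with $K_1=0$ applies. You have simply supplied the details (the rescaling into $D$, the preservation of $(J,J^{\mathbb{C}})$-holomorphicity under the affine map, and the extraction of $d\tilde f\equiv 0$ from $u\equiv 0$) that the paper leaves implicit.
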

Using the fact that the complex plane $\mathbb{C}$ minus two distinct points admits a complete Hermitian metric with holomorphic curvature less than a negative constant \cite{Kob05}, we derive little Picard theorem for basic CR functions.
\begin{corollary}\label{corollary 4.3}
Let $(M,HM,J,\theta)$ be a complete pseudo-Hermitian manifold with non-negative pseudo-Hermitian Ricci curvature and $\|A\|_{C^1}$ bounded from above. Any basic CR function $u: M\rightarrow \mathbb{C}$ missing more than one point in its image is constant.
\end{corollary}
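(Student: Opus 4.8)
The plan is to realize the basic CR function as a $(J,J^{\mathbb{C}})$-holomorphic map into a suitably negatively curved target and then invoke Theorem \ref{theorem 4.1} with $K_1=0$, in complete parallel with the proof of Corollary \ref{corollary 4.2}. First I would record that, by the remark following Definition \ref{definition 3}, a smooth map $u:M\to\mathbb{C}$ is a basic CR function if and only if it is $(J,J^{\mathbb{C}})$-holomorphic, where $\mathbb{C}$ carries its canonical complex structure $J^{\mathbb{C}}$. Since the $(J,J^{\mathbb{C}})$-holomorphicity condition \eqref{3.1} involves only the complex structures and no metric, this property is unaffected if we restrict the codomain to any open subset of $\mathbb{C}$ containing the image of $u$.

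Suppose $u$ omits at least two distinct values $a,b\in\mathbb{C}$. Then $u$ may be viewed as a $(J,J^{\mathbb{C}})$-holomorphic map $u:M\to N$, where $N=\mathbb{C}\setminus\{a,b\}$ carries the standard complex structure. The next step is to equip $N$ with an appropriate Hermitian metric: by the cited result of Kobayashi \cite{Kob05}, the twice-punctured plane $\mathbb{C}\setminus\{a,b\}$ admits a complete Hermitian metric $h$ whose holomorphic (sectional) curvature is bounded above by a negative constant $-K_2<0$. Because $\dim_{\mathbb{C}}N=1$, the holomorphic bisectional curvature coincides with the holomorphic sectional curvature, so $(N,J^{\mathbb{C}},h)$ satisfies exactly the curvature hypothesis required by Theorem \ref{theorem 4.1}; one could alternatively invoke Remark \ref{rk3} to pass from a sectional bound to the needed one, but in the one-dimensional case this is unnecessary.

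It then remains to verify the hypotheses on the source. By assumption $M$ is a complete pseudo-Hermitian manifold with $\|A\|_{C^1}$ bounded from above, and its pseudo-Hermitian Ricci curvature is non-negative, so $-K_1=0$ is a valid lower bound with $K_1=0$. Applying Theorem \ref{theorem 4.1} to the $(J,J^{\mathbb{C}})$-holomorphic map $u:M\to(N,J^{\mathbb{C}},h)$ gives $u^*h\leq\frac{K_1}{K_2}G_\theta=0$, hence the horizontal differential of $u$ vanishes; combined with the basic condition $du(\xi)=0$ this forces $du\equiv 0$, so $u$ is constant on the connected manifold $M$. The only substantive ingredient beyond the Schwarz lemma already established is the existence of the complete negatively curved metric on $\mathbb{C}\setminus\{a,b\}$, which is precisely Kobayashi's theorem; I expect this to be the main point to cite carefully, while everything else is a direct specialization of Theorem \ref{theorem 4.1}.
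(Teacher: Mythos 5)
Your proposal is correct and takes essentially the same route as the paper: the paper's (one-line) argument is precisely to view the basic CR function as a $(J,J^{\mathbb{C}})$-holomorphic map into $\mathbb{C}$ minus two points equipped with Kobayashi's complete Hermitian metric of negative holomorphic curvature, and then apply Theorem \ref{theorem 4.1} with $K_1=0$. Your additional observations --- that holomorphicity is unaffected by restricting the codomain, and that in complex dimension one the bisectional and sectional curvature bounds coincide --- are accurate and only make the argument more explicit.
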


\section{Schwarz type lemma for $(J^N, J)$-holomorphic maps}
In this section, we will establish the Schwarz type lemma for  $(J^N, J)$-holomorphic maps. In \cite{Yau78}, Yau have proved that
\begin{proposition}\label{proposition 5.1}
Let $N$ be a complete K\"ahler manifold with Ricci curvature bounded below. A non-negative smooth function $u$ on $N$ satisfies the following inequality
\[\Delta u\geq -k_1u+ k_2u^2,\]
where $k_1\geq 0, k_2>0.$
Then $\sup_M u \leq \frac{k_1}{k_2}.$
\end{proposition}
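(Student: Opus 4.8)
The final statement to prove is Proposition \ref{proposition 5.1}, Yau's gradient-estimate result: on a complete Kähler manifold $N$ with Ricci curvature bounded below, any nonnegative $u$ satisfying $\Delta u \geq -k_1 u + k_2 u^2$ (with $k_1 \geq 0$, $k_2 > 0$) obeys $\sup_N u \leq k_1/k_2$. My plan is to run the same localized maximum-principle argument that was already carried out in Section \ref{sec4} for the pseudo-Hermitian case, but now in the Riemannian/Kähler setting where the genuine Laplacian $\Delta$ and the full Laplacian comparison theorem are available. The structure is a direct transplant of the computation leading to \eqref{4.4}--\eqref{4.5}, so the main work is bookkeeping rather than new ideas.

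First I would fix a point $z \in N$, let $\gamma(x)$ denote the Riemannian distance from $z$, and for a radius $a$ consider the test function $\phi = (a^2 - \gamma^2)^2 u$ on the geodesic ball $B_a(z)$. Since $\phi$ vanishes on $\partial B_a(z)$ and $u \geq 0$, $\phi$ attains an interior maximum at some $x_0$. As in Section \ref{sec4}, I would first handle the possible non-smoothness of $\gamma$ at a cut point by the standard Calabi trick (replacing $z$ by a slightly pushed-in base point $\tau(\varepsilon)$ along a minimizing geodesic and letting $\varepsilon \to 0$, exactly as done before \eqref{4.2}), so that $\gamma$ may be assumed smooth near $x_0$. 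Applying the first- and second-order maximum-principle conditions at $x_0$ and eliminating the gradient term $\nabla u / u$, I obtain the Riemannian analogue of \eqref{4.4}:
\[
\frac{\Delta u}{u} - 6\frac{\|\nabla(a^2-\gamma^2)\|^2}{(a^2-\gamma^2)^2} - 2\frac{\Delta \gamma^2}{a^2-\gamma^2} \leq 0.
\]

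Next I would feed in the three estimates. The curvature hypothesis gives $\tfrac12 \Delta u \geq -k_1 u + k_2 u^2$ directly; the Lipschitz bound $\|\nabla \gamma\| \leq 1$ gives $\|\nabla(a^2-\gamma^2)\|^2 \leq 4a^2$; and the Ricci lower bound together with the classical Laplacian comparison theorem gives $\Delta \gamma^2 = 2\gamma \Delta \gamma + 2\|\nabla\gamma\|^2 \leq C(1+a)$ for a constant $C$ independent of $a$ (this is the role that Lemma \ref{lemma 2.2} played in the sub-Laplacian argument). Substituting these into the displayed inequality bounds $u(x_0)$ by $k_1/k_2$ plus terms of order $a^2/(a^2-\gamma^2(x_0))^2$ and $(1+a)/(a^2-\gamma^2(x_0))$. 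Multiplying through by $(a^2-\gamma^2)^2$, using that $x_0$ is the maximum of $\phi$, and then letting $a \to \infty$ for fixed $x$, all the error terms vanish and one concludes $\sup_N u \leq k_1/k_2$.

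The only genuine obstacle is the same one the authors have already resolved in the CR setting: ensuring the distance function can be treated as smooth at the maximum point despite cut locus issues, which the Calabi perturbation dispatches. Everything else is the completeness hypothesis (guaranteeing the maximum is interior) combined with the Laplacian comparison theorem, both of which are standard for complete manifolds with Ricci bounded below. Since this proposition is quoted verbatim from Yau \cite{Yau78}, I would in practice simply cite it; but the self-contained argument above mirrors precisely the Section \ref{sec4} computation with $\Delta_b$ replaced by $\Delta$.
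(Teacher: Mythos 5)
The paper does not actually prove Proposition \ref{proposition 5.1}; it is quoted as a known result of Yau \cite{Yau78} and used as a black box in Section 5. Your self-contained argument is correct, and it is the Chen--Cheng--Lu style localization (test function $(a^2-\gamma^2)^2u$ on $B_a(z)$, Calabi's trick at the cut locus, Laplacian comparison, then $a\to\infty$) --- exactly the scheme the authors themselves carry out in Section \ref{sec4} for the sub-Laplacian. This is a genuinely different route from Yau's original proof, which instead applies his generalized (Omori--Yau) maximum principle to a bounded transform of $u$ and avoids distance functions altogether; your version buys a completely elementary argument at the cost of needing the Laplacian comparison theorem and the cut-locus perturbation, while Yau's buys brevity at the cost of invoking the generalized maximum principle. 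Two minor points to tidy up: (i) the hypothesis is $\Delta u\geq -k_1u+k_2u^2$, not $\tfrac12\Delta u\geq -k_1u+k_2u^2$ as you restate it --- harmless, since the factor only rescales the error terms and not the limiting bound $k_1/k_2$; (ii) you should note that on a K\"ahler manifold the complex Laplacian $\Delta v=2\sum_\alpha v_{\alpha\bar\alpha}$ used in the paper coincides (up to the usual normalization) with the Riemannian Laplace--Beltrami operator, which is what licenses importing the classical Riemannian comparison theorem; this identification fails for general Hermitian metrics, which is precisely why Tosatti's extension \cite{Tos07} requires extra work.
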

It is notable that Tosatti has generalized this proposition to almost Hermitian manifold in \cite{Tos07}.

Let $g:(N^n,J^N,h)\rightarrow (M^{2m+1},HM,J,\theta) $ be a $(J^N, J)$-holomorphic map. If $N$ is a complete K\"ahler manifold and $M$ is a Sasakian manifold, by \eqref{3.31}, we obtain 
\begin{align}
\frac{1}{2}\Delta v=\sum_{i,\alpha,\gamma}(|g^i_{\alpha\gamma}&|^2+|g^i_{\alpha\bar{\gamma}}|^2)+\sum_{i,\alpha,\delta}g^{\bar{i}}_{\bar{\alpha}}g^i_\delta R_{\alpha\bar{\delta}}-\sum_{\alpha,\gamma,i,j,k,l}g^{\bar{i}}_{\bar{\alpha}}g^j_\gamma g^k_\alpha g^{\bar{l}}_{\bar{\gamma}}\hat{R}_{j\bar{i}k\bar{l}}\label{61}
\end{align}
where $v$ is defined by \eqref{3.26}.
Let $-K_1$ be the greatest lower bound of the Ricci curvature of $N$, and $-K_2$ be the least upper bound of the pseudo-Hermitian bisectional curvature of $M$, where $K_1\geq 0, K_2>0$. It follows from \eqref{61} that 
\begin{align}
\frac{1}{2}\Delta v\geq -K_1v+K_2v^2.
\end{align}
By Proposition \ref{proposition 5.1}, we deduce that $\sup v\leq \frac{K_1}{K_2}$, which, combining with \eqref{3.30'}, yields the Schwarz type lemma for $(J^N, J)$-holomorphic maps as follows.
\begin{theorem}
 Let $(N^n,J^N,h)$ be a complete K\"ahler manifold with Ricci curvature bounded from below by $-K_1\leq0$. Let $(M^{2m+1},HM,J,\theta)$ be a Sasakian manifold with pseudo-Hermitian bisectional curvature bounded from above by $-K_2<0$.  Then for any $(J^N, J)$-holomorphic map $g: N\rightarrow M$, we have
 \begin{align}
 g^*G_\theta \leq\frac{K_1}{K_2}h.\label{5.3}
 \end{align}
 In particular, if $K_1=0$, any $(J^N, J)$-holomorphic map is horizontally constant.
 \begin{remark} 
By Royden's lemma (cf. \cite{Roy80}), we can weaken the hypothesis on $M$ by assuming the pseudo-Hermitian sectional curvature of $M$ is bounded above by $-K_2<0$, but the constant $\frac{K_1}{K_2}$ in \eqref{5.3} will be replaced by $\frac{2v}{v+1}\frac{K_1}{K_2}$, where $v$ is the maximal rank of $dg_H$.
 \end{remark}
\end{theorem}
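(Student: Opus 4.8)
The plan is to reduce the assertion, via the pointwise inequality $g^*G_\theta\le v\,h$ from \eqref{3.30'}, to the single upper bound $\sup_N v\le K_1/K_2$ for the horizontal energy density $v=\sum_{i,\alpha}g^i_\alpha g^{\bar i}_{\bar\alpha}$ defined in \eqref{3.26}. Since $N$ is complete Kähler with Ricci curvature bounded below, Yau's maximum principle (Proposition \ref{proposition 5.1}) delivers exactly this bound once I establish the differential inequality $\tfrac12\Delta v\ge -K_1 v+K_2 v^2$. Thus the whole argument consists of two moves: first specialize the general Bochner formula \eqref{3.31} to the Kähler--Sasakian setting, and then estimate the surviving curvature terms by the prescribed bounds.

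For the first move I would feed the hypotheses into \eqref{3.31} termwise. The Kähler condition on $N$ forces the torsion coefficients $T^\beta_{\gamma\alpha}$ to vanish, killing the last term. The Sasakian condition makes $A\equiv 0$, hence $\hat A^i_{\bar j}=0$ and, because $W^i_{jk}$ and $W^i_{j\bar k}$ are built from covariant derivatives of $A$, also $\hat W^i_{jk}=\hat W^i_{j\bar k}=0$; this removes the two middle lines of \eqref{3.31} and, through \eqref{3.21}, even forces the mixed second derivatives $g^i_{\alpha\bar\gamma}$ to vanish. What survives is precisely the clean formula \eqref{61}, containing only the nonnegative gradient terms, the Ricci term $\sum g^{\bar i}_{\bar\alpha}g^i_\delta R_{\alpha\bar\delta}$ of $N$, and the curvature term $-\sum g^{\bar i}_{\bar\alpha}g^j_\gamma g^k_\alpha g^{\bar l}_{\bar\gamma}\hat R_{j\bar i k\bar l}$ of $M$.

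The main obstacle is the sign analysis of that last curvature term, and I would dispatch it by a pointwise adapted frame. At a fixed point of $N$, choose unitary frames on $N$ and on $M$ diagonalizing the horizontal differential, so that $g^i_\alpha=\lambda_\alpha\delta_{i\alpha}$. Then $v=\sum_\alpha|\lambda_\alpha|^2$, the Ricci term reduces to $\sum_\alpha R_{\alpha\bar\alpha}|\lambda_\alpha|^2\ge -K_1 v$ by the lower bound on $\mathrm{Ric}(N)$, and, after using the symmetry $R_{i\bar j k\bar l}=R_{k\bar j i\bar l}$ to rewrite $\hat R_{\gamma\bar\alpha\alpha\bar\gamma}=\hat R_{\alpha\bar\alpha\gamma\bar\gamma}$, the curvature term collapses to
\[
-\sum_{\alpha,\gamma}|\lambda_\alpha|^2|\lambda_\gamma|^2\,\hat R_{\alpha\bar\alpha\gamma\bar\gamma}.
\]
Each coefficient $\hat R_{\alpha\bar\alpha\gamma\bar\gamma}$ is an unnormalized pseudo-Hermitian bisectional curvature of $M$, bounded above by $-K_2$; hence this term is $\ge K_2\big(\sum_\alpha|\lambda_\alpha|^2\big)^2=K_2 v^2$. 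Discarding the nonnegative gradient terms then yields $\tfrac12\Delta v\ge -K_1 v+K_2 v^2$.

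Finally I would invoke Proposition \ref{proposition 5.1}, whose hypotheses (completeness of $N$ and the Ricci lower bound) are met, to conclude $\sup_N v\le K_1/K_2$; combining this with \eqref{3.30'} gives $g^*G_\theta\le\tfrac{K_1}{K_2}h$. In the borderline case $K_1=0$ the inequality forces $\sup_N v\le 0$, and since $v\ge 0$ by definition we get $v\equiv 0$, that is $dg_H\equiv 0$, so $g$ is horizontally constant.
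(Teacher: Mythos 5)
Your proposal is correct and follows essentially the same route as the paper: specialize the Bochner formula \eqref{3.31} to the K\"ahler--Sasakian setting to obtain \eqref{61}, derive the differential inequality $\tfrac12\Delta v\ge -K_1v+K_2v^2$, and conclude via Proposition \ref{proposition 5.1} together with \eqref{3.30'}. The only difference is that you spell out the diagonalization argument for the curvature estimate, a step the paper leaves implicit.
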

If $\dim_{\mathbb{C}} N=1$, one can also weaken the hypothesis on $N$.
\begin{corollary}
\label{corollary 5.3}
 Let $(N,J^N,h)$ be a complete 1-dimensional K\"ahler manifold with Ricci curvature bounded from below by $-K_1\leq0$. Let $(M^{2m+1},HM,J,\theta)$ be a Sasakian manifold with pseudo-Hermitian sectional curvature bounded from above by $-K_2<0$. Then for any $(J^N, J)$-holomorphic map $g: N\rightarrow M$, \eqref{5.3} holds.
\end{corollary}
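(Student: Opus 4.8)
The plan is to run the argument of the preceding theorem almost verbatim, the sole place where the hypothesis intervenes being the estimate of the quartic curvature term in the Bochner formula \eqref{61}. I will show that when $\dim_{\mathbb{C}}N=1$ this term is controlled by the pseudo-Hermitian \emph{sectional} curvature rather than the bisectional curvature, so the weaker hypothesis on $M$ already suffices, and everything else goes through unchanged.

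First I would start from \eqref{61}, which is valid here since $N$ is complete Kähler and $M$ is Sasakian. With $n=1$ the holomorphic indices $\alpha,\gamma,\delta$ all collapse to the single value $1$. Writing $X^p:=g^p_1$ and $X=\sum_p X^p e_p\in T_{1,0}M$, I observe from \eqref{3.26} that $v=\sum_i g^i_1 g^{\bar i}_{\bar 1}=\sum_p |X^p|^2$, and that the quartic term reduces, after the mere relabeling $(j,i,k,l)\mapsto(a,b,c,d)$, to
\[
\sum_{i,j,k,l} g^{\bar i}_{\bar 1} g^j_1 g^k_1 g^{\bar l}_{\bar 1}\, \hat R_{j\bar i k\bar l}
=\sum_{a,b,c,d} \hat R_{a\bar b c\bar d}\, X^a X^{\bar b} X^c X^{\bar d}.
\]
The right-hand side is exactly the numerator of the pseudo-Hermitian sectional curvature in the direction $X$, hence it equals $H(X)\,v^2$ with $H(X)\le -K_2$. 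This is the crucial point: because $N$ is one-dimensional there are no independent directions $\alpha\neq\gamma$ to produce a genuine bisectional quantity, so no unitary diagonalization of $(g^i_\alpha)$ is required (it is precisely such a diagonalization that forces the bisectional bound when $n\ge 2$). Consequently $-\sum g^{\bar i}_{\bar 1} g^j_1 g^k_1 g^{\bar l}_{\bar 1}\hat R_{j\bar i k\bar l}\ge K_2 v^2$.

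Next I would dispose of the remaining terms as usual. The gradient contributions $\sum_i\big(|g^i_{11}|^2+|g^i_{1\bar 1}|^2\big)$ are nonnegative and discarded. For the Ricci term, with $\alpha=\delta=1$, one has $\sum_i g^{\bar i}_{\bar 1} g^i_1 R_{1\bar 1}=R_{1\bar 1}\,v\ge -K_1 v$, using $R_{1\bar 1}\ge -K_1$ and $v\ge 0$. Combining these with the curvature estimate yields
\[
\tfrac12\Delta v\ \ge\ -K_1 v+K_2 v^2,
\]
i.e. $\Delta v\ge -2K_1 v+2K_2 v^2$ on the complete Kähler manifold $N$ whose Ricci curvature is bounded below. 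Proposition~\ref{proposition 5.1} then gives $\sup_N v\le K_1/K_2$, and \eqref{3.30'} finally produces $g^*G_\theta\le v\,h\le \frac{K_1}{K_2}h$, which is \eqref{5.3}.

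The only step needing genuine attention is the curvature identification, and even there the work is light: once one notices that $\alpha=\gamma=1$ is forced in dimension one, the would-be bisectional form degenerates to the sectional form by a direct relabeling, with no curvature symmetries and no diagonalization invoked. Thus the apparent obstacle — trading a bisectional bound for a sectional one — dissolves exactly in complex dimension one, and the remainder of the proof is identical to that of the preceding theorem.
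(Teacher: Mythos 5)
Your proof is correct and is exactly the argument the paper intends (the corollary is stated without an explicit proof): in complex dimension one the indices $\alpha=\gamma=1$ coincide, so the quartic term in \eqref{61} is literally the numerator of the pseudo-Hermitian sectional curvature in the direction $X=\sum_p g^p_1 e_p$, giving $\tfrac12\Delta v\geq -K_1v+K_2v^2$ and hence the conclusion via Proposition \ref{proposition 5.1} and \eqref{3.30'}. Your observation that no curvature symmetry or diagonalization is needed here, in contrast with the case $n\geq 2$, is the right way to see why the sectional bound suffices.
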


\section{Invariant pseudodistance on CR manifolds}
In this section, we will give an invariant pseudodistance on pseudo-Hermitian manifolds, which are analogous to the Carath\'eodory pseudodistance on complex manifolds(cf. \cite{Ca26}). Note that this notion holds true for more general CR manifolds. 

Let $(M^{2m+1},HM,J,\theta)$ be a pseudo-Hermitian manifold. Let $D$ denote the unit disk in the complex plane and $\rho$ denote the Bergman distance of $D$.  Analogous to the Carath\'eodory pseudodistance on complex manifolds, we may also define CR Carath\'eodory pseudodistance on $M$ by
\begin{align}
c_M(p, q)= \sup_f \rho(f(p), f(q))
\end{align}
where the supremum is taken for all possible CR functions $f: M\rightarrow D$. Note that $f: M\rightarrow D$ is a CR function if and only if it satisfies $df\circ J=J^D\circ df$ on $HM$. It is easy to verify the following axioms for the pseudodiatance: 
\begin{align}
c_M(p, q)\geq 0, \ c_M(p, q)=c_M(q, p), \ c_M(p, r)+c_M(r, q)\geq c_M(p, q).
\end{align}
The most important property of $c_M$ is given as follows, whose proof is trivial.
\begin{proposition}
Let $M, \tilde{M}$ be two pseudo-Hermitian manifolds and let $f:M\rightarrow \tilde{M}$ be a CR map. Then 
\[c_{\tilde{M}}(f(p), f(q))\leq c_M(p, q) \]
\end{proposition}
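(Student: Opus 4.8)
The plan is to establish the contraction property $c_{\tilde{M}}(f(p),f(q))\le c_M(p,q)$ directly from the definition of the CR Carath\'eodory pseudodistance as a supremum over CR functions into the disk. The key observation is that CR maps compose: if $f:M\rightarrow\tilde{M}$ is a CR map and $g:\tilde{M}\rightarrow D$ is a CR function, then the composition $g\circ f:M\rightarrow D$ is again a CR function. Granting this, every competitor appearing in the supremum defining $c_{\tilde{M}}(f(p),f(q))$ pulls back to a competitor in the supremum defining $c_M(p,q)$, and the monotonicity of $\sup$ yields the inequality.

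First I would verify the composition claim. By definition, $f$ being a CR map means $df(HM)\subseteq H\tilde{M}$ and $df\circ J=\tilde{J}\circ df$ on $HM$, while $g:\tilde{M}\rightarrow D$ being a CR function means $dg\circ\tilde{J}=J^D\circ dg$ on $H\tilde{M}$. For any $X\in HM$ we then compute, using the chain rule $d(g\circ f)=dg\circ df$ together with $df(HM)\subseteq H\tilde{M}$,
\[
d(g\circ f)(JX)=dg\bigl(df(JX)\bigr)=dg\bigl(\tilde{J}\,df(X)\bigr)=J^D\,dg\bigl(df(X)\bigr)=J^D\,d(g\circ f)(X),
\]
so $g\circ f$ satisfies $d(g\circ f)\circ J=J^D\circ d(g\circ f)$ on $HM$, i.e.\ $g\circ f$ is a CR function $M\rightarrow D$.

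Next I would run the supremum comparison. Fix $p,q\in M$. For any CR function $g:\tilde{M}\rightarrow D$, the map $g\circ f$ is a CR function $M\rightarrow D$ by the previous step, so by the definition of $c_M$,
\[
\rho\bigl(g(f(p)),g(f(q))\bigr)=\rho\bigl((g\circ f)(p),(g\circ f)(q)\bigr)\le c_M(p,q).
\]
Since this bound is uniform in $g$, taking the supremum over all CR functions $g:\tilde{M}\rightarrow D$ on the left gives exactly $c_{\tilde{M}}(f(p),f(q))\le c_M(p,q)$, which is the desired inequality.

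I do not expect a genuine obstacle here — this is the CR analogue of the classical distance-decreasing property of the Carath\'eodory pseudodistance under holomorphic maps, and the author already flags the proof as trivial. The only point requiring a word of care is that the class of competitors for $\tilde{M}$ injects into (rather than necessarily exhausts) the competitors for $M$ via pullback by $f$, which is precisely what makes the inequality go in the stated direction; no claim of equality is made or needed.
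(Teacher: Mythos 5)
Your proof is correct and is precisely the ``trivial'' argument the paper has in mind: CR functions $g:\tilde{M}\rightarrow D$ pull back under the CR map $f$ to CR functions $g\circ f:M\rightarrow D$, and the supremum comparison then gives the contraction property. The verification that $d(g\circ f)\circ J=J^D\circ d(g\circ f)$ on $HM$ correctly uses both defining properties of a CR map ($df(HM)\subseteq H\tilde{M}$ and equivariance of $df$ with respect to $J$ and $\tilde{J}$), so nothing is missing.
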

\begin{corollary}
Let $f:M\rightarrow \tilde{M}$ be a CR isomorphism, then 
\[c_{\tilde{M}}(f(p), f(q))=c_M(p, q) \]
\end{corollary}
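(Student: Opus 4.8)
The plan is to derive the equality from the preceding Proposition by applying it to both $f$ and its inverse. Since the Proposition already gives the one-sided estimate for an arbitrary CR map, and a CR isomorphism is by definition a CR diffeomorphism whose inverse is again a CR map, the two resulting inequalities combine to force equality. No new analytic input is needed beyond the Proposition.

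First I would apply the Proposition directly to the CR map $f: M \to \tilde{M}$ at the points $p, q \in M$, yielding
\[
c_{\tilde{M}}(f(p), f(q)) \leq c_M(p, q).
\]
Next I would observe that because $f$ is a CR isomorphism (a $C^\infty$ diffeomorphism with $df(HM) \subseteq H\tilde{M}$ and $df \circ J = \tilde{J} \circ df$ on $HM$), the inverse map $f^{-1}: \tilde{M} \to M$ is also a CR map: differentiating the identity $f^{-1} \circ f = \mathrm{id}_M$ shows $d f^{-1}(H\tilde{M}) \subseteq HM$ and that $d f^{-1}$ intertwines $\tilde{J}$ with $J$ on $H\tilde{M}$. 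Applying the Proposition to $f^{-1}$ at the points $f(p), f(q) \in \tilde{M}$ then gives
\[
c_M\bigl(f^{-1}(f(p)), f^{-1}(f(q))\bigr) \leq c_{\tilde{M}}(f(p), f(q)).
\]
Since $f^{-1} \circ f = \mathrm{id}_M$, the left-hand side equals $c_M(p, q)$, so
\[
c_M(p, q) \leq c_{\tilde{M}}(f(p), f(q)).
\]

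Combining the two displayed inequalities yields $c_{\tilde{M}}(f(p), f(q)) = c_M(p, q)$, which is the claim. The only point requiring care is the verification that $f^{-1}$ is genuinely a CR map; I do not anticipate this being a real obstacle, as it follows immediately from differentiating $f^{-1}\circ f=\mathrm{id}_M$ and $f\circ f^{-1}=\mathrm{id}_{\tilde M}$ and using that $df$ restricts to a $J$-linear bundle isomorphism $HM \to H\tilde{M}$. Everything else is a mechanical double application of the already-established Proposition, consistent with the authors' remark that the proof is trivial.
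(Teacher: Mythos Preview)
Your proof is correct and follows exactly the route the paper intends: the corollary is stated without proof as an immediate consequence of the preceding Proposition, and the only way to extract equality from that one-sided inequality is to apply it to both $f$ and $f^{-1}$, precisely as you do. Your check that $f^{-1}$ is again a CR map (using that $df$ restricts to a bijection $HM\to H\tilde M$ by a dimension count, hence $(df)^{-1}$ carries $H\tilde M$ to $HM$ and intertwines the complex structures) fills in the one detail the paper leaves implicit.
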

In order to apply the  Schwarz lemma in section \ref{sec4}, we may define the basic pseudodistance by
\begin{align}
c'_M(p, q)= \sup_f \rho(f(p), f(q))
\end{align}
where the supremum is taken for all $(J, J^D)$-holomorphic maps $f: M\rightarrow D$. It's clear that
$c'_M\leq c_M$. 
\begin{example}
Let $D$ be the unit disc in $\mathbb{C}$. Set $\theta=dt+i(\bar{\partial}-\partial)\log(1-|z|^2)^{-1}$, $\xi=\frac{\partial}{\partial t}$, and $H=\ker \theta$. We define an almost complex structure $J$ on $H$ to be the horizontal lift of the complex structure $J^D$ on $D$. Then $D^3(-1)= (D\times \mathbb{R}, H, J, \theta)$ is the 3-dimensional Sasakian space form with pseudo-Hermitian sectional curvature $-1$.  (cf. \cite{BG08} or \cite{DRY19}). Choose $T=\frac{\partial}{\partial z}+i\frac{\bar{z}}{1-|z|^2}\frac{\partial}{\partial t}$ as the frame field of $T_{1,0}D^3(-1)$.  It is easy to see that $f=f(z,t): D^3(-1)\rightarrow D$ is a basic CR function if and only if $\frac{\partial f}{\partial t}=\frac{\partial f}{\partial \bar{z}}=0$. Thus, for any $(z_1,t_1), (z_2,t_2)\in D^3(-1)$, $c'_{D^3(-1)}((z_1,t_1), (z_2,t_2))=c'_{D^3(-1)}((z_1,0), (z_2,0))=c^D(z_1, z_2)$, where $c^D$ is the Carath\'eodory distance of unit disk $D$. For $c_{D^3(-1)}$, since $f_1(z,t)=z$ and $f_2(z,t)=\frac{t-i(\log{(1-|z|^2)}+1)}{t-i(\log(1-|z|^2)-1)}$ are CR functions from $D^3(-1)$ to $D$, $c_{D^3(-1)}((z_1,t_1), (z_2,t_2))\geq \rho(z_1, z_2)>0$ for any $z_1\neq z_2$ in $D$, and $c_{D^3(-1)}((z,t_1), (z,t_2))\geq \rho(f_2(z,t_1), f_2(z,t_2))>0$ for $z\in D, t_1\neq t_2$. Therefore, $c_{D^3(-1)}((z_1,t_1), (z_2,t_2))=0$ if and only if $(z_1,t_1)=(z_2,t_2)$. Consequently, for two distinct points $p$ and $q$ in $D^3(-1)$, there is a bounded CR function $f$ on $D^3(-1)$ such that $f(p)\neq f(q)$.
\end{example}

Making use of Theorem \ref{theorem 4.1}, we have 
\begin{theorem}\label{theorem 6.3}
Let $(M,HM,J,\theta)$ be a complete pseudo-Hermitian manifold with pseudo-Hermitian Ricci curvature bounded from below by a constant $-K\leq 0$ and $\|A\|_{C^1}$ bounded from above. Then 
\[c'_M(p,q)\leq \sqrt{K}d^M_{cc}(p, q)<\infty\]
for $p,q\in M$. In particular, if $K=0$, $c'_M\equiv0$.
\end{theorem}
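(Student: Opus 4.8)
The plan is to combine the Schwarz type estimate of Theorem \ref{theorem 4.1} with the elementary fact that an infinitesimal length contraction integrates to a global distance contraction along horizontal curves. First I would fix an arbitrary $(J,J^D)$-holomorphic map $f:M\rightarrow D$ and equip $D$ with the Bergman metric $h$, normalized so that its constant holomorphic sectional curvature equals $-1$; since $\dim_{\mathbb{C}}D=1$ the holomorphic bisectional curvature coincides with the holomorphic sectional curvature, so $D$ has holomorphic bisectional curvature bounded above by $-1<0$ and no appeal to Royden's lemma is needed. The hypotheses on $M$ (completeness, pseudo-Hermitian Ricci curvature bounded below by $-K$, and $\|A\|_{C^1}$ bounded above) are exactly those of Theorem \ref{theorem 4.1}, so applying it with $K_1=K$ and $K_2=1$ yields the pointwise estimate $f^*h\leq K\,G_\theta$ as symmetric bilinear forms; note both sides vanish along $\xi$, so this is genuinely an inequality on $HM$.

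Next I would pass from this pointwise bound to a distance estimate. Given $p,q\in M$, let $\gamma:[0,l]\rightarrow M$ be any horizontal curve with $\gamma(0)=p$ and $\gamma(l)=q$; since $\gamma'(t)\in HM$ a.e., the length of the image curve $f\circ\gamma$ in $(D,h)$ satisfies
\[L_h(f\circ\gamma)=\int_0^l\sqrt{(f^*h)(\gamma',\gamma')}\,dt\leq\sqrt{K}\int_0^l\sqrt{L_\theta(\gamma',\gamma')}\,dt,\]
and the right-hand integral is precisely the Carnot--Carath\'eodory length of $\gamma$. Because the Bergman distance $\rho$ is the distance induced by $h$, we have $\rho(f(p),f(q))\leq L_h(f\circ\gamma)$, and taking the infimum over all horizontal curves $\gamma$ joining $p$ and $q$ gives $\rho(f(p),f(q))\leq\sqrt{K}\,d^M_{cc}(p,q)$. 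Finally, taking the supremum over all $(J,J^D)$-holomorphic maps $f$ produces $c'_M(p,q)\leq\sqrt{K}\,d^M_{cc}(p,q)$. Finiteness is immediate: by the Chow--Rashevsky theorem the two points are joined by some horizontal curve of finite length, so $d^M_{cc}(p,q)<\infty$. When $K=0$ the bound forces $c'_M(p,q)\leq 0$, which together with the nonnegativity axiom for $c'_M$ gives $c'_M\equiv 0$; equivalently, Theorem \ref{theorem 4.1} already forces every such $f$ to be constant.

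The only genuinely delicate point is bookkeeping of the curvature normalization: the constant furnished by Theorem \ref{theorem 4.1} is $K_1/K_2$, so to land on exactly $\sqrt{K}$ one must use the normalization of the Bergman metric on $D$ under which the holomorphic sectional curvature equals $-1$, i.e. $K_2=1$; rescaling $h$ would only rescale $\rho$ and the constant consistently. Beyond that the argument is routine, the key conceptual step being that $f^*h$ is supported on $HM$ (as $f$ is in particular a basic CR function, $df(\xi)=0$), which is precisely what makes the comparison with the horizontal form $G_\theta$ and the integration along horizontal curves legitimate.
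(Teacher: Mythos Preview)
Your proposal is correct and follows essentially the same approach as the paper's proof: both apply Theorem \ref{theorem 4.1} to obtain $f^*ds^2_D\leq K\,G_\theta$ for the curvature $-1$ metric on $D$, then integrate along horizontal curves and take the supremum over $f$ and the infimum over curves. Your additional remarks on the curvature normalization, the support of $f^*h$ on $HM$, and the invocation of Chow--Rashevsky for finiteness are all accurate elaborations of points the paper leaves implicit.
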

\begin{proof}
Let $ds^2_D$ denote the Poincar\'e metric on $D$, and the curvature of $(D, ds^2_D)$ is -1. For any $(J, J^D)$-holomorphic map $f: M\rightarrow D$, by Theorem \ref{theorem 4.1}, we have 
\begin{align}
f^*ds^2_D\leq KG_\theta.\label{6.4}
\end{align}  
Assume that $p, q$ are two points in $M$ and $\tau: [0, 1]\rightarrow M$ is a horizontal Lipschitz curve between them. Then,
\begin{align}
\rho(f(p), f(q))\leq\int_0^1\sqrt{ds^2_D(f_*(\tau'), f_*(\tau'))}\, dt\leq\sqrt{K}\int_0^1\sqrt{L_\theta(\tau', \tau')}\, dt,
\end{align}
where the second inequality follows from \eqref{6.4}. Taking the supremum with respect to $f$ and infimum with respect to $\tau$, the theorem follows.
\end{proof}

By the Definitions of the Carath\'eodory and CR Carath\'eodory pseudodistances, it is easy to see the relationships between them.
\begin{proposition}\label{proposition 6.4}
Let $(M^{2m+1},HM,J,\theta)$ be a pseudo-Hermitian manifold and $(N, J^N)$ a Hermitian manifold.
\begin{enumerate}[(i)]
\item For any $(J, J^N)$-holomorphic map $f: M\rightarrow N$, we have
\begin{align}
c^N(f(p), f(q))\leq c'_M(p, q)\leq c_M(p, q)
\end{align}
for any $p, q\in M$, where $c^N$ is the Carath\'eodory pseudodistance on $N$.\label{proposition 6.4 (1)}
\item For any $(J^N, J)$-holomorphic map $g:N\rightarrow M$, we have
\begin{align}
c'_M(g(x), g(y))\leq c^N(x, y) 
\end{align}
for any $x, y\in N$.
\end{enumerate}
\end{proposition}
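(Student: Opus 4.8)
The plan is to derive both inequalities purely formally from the defining suprema, together with the observation that the three classes of maps into the disc $D$ are stable under composition; no analytic or curvature input is needed here, only the chain rule and the respective (generalized) holomorphicity conditions. The key point is that a map into $D$ which competes in the supremum defining one pseudodistance can be manufactured, by pre- or post-composition, from a map competing in another, and that the Poincar\'e distance $\rho$ on $D$ never changes under this rewriting.

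For part (i), let $\phi: N\rightarrow D$ be an arbitrary holomorphic map, so that $d\phi\circ J^N=J^D\circ d\phi$, and consider the composite $\phi\circ f: M\rightarrow D$. Using the chain rule together with \eqref{3.1} for $f$ and the holomorphicity of $\phi$, I would compute
\[
d(\phi\circ f)\circ J=d\phi\circ df\circ J=d\phi\circ J^N\circ df=J^D\circ d\phi\circ df=J^D\circ d(\phi\circ f),
\]
so $\phi\circ f$ is a $(J,J^D)$-holomorphic map. Hence $\rho(\phi(f(p)),\phi(f(q)))\leq c'_M(p,q)$ by the definition of $c'_M$, and taking the supremum over all admissible $\phi$ gives $c^N(f(p),f(q))\leq c'_M(p,q)$. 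The remaining inequality $c'_M(p,q)\leq c_M(p,q)$ has already been recorded, since every $(J,J^D)$-holomorphic map into $D$ is in particular a CR function.

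For part (ii), let $f:M\rightarrow D$ be an arbitrary $(J,J^D)$-holomorphic map and form $f\circ g: N\rightarrow M\rightarrow D$. Here one must first observe that, since $J\xi=0$, the condition $df\circ J=J^D\circ df$ forces $df(\xi)=0$; this is exactly the basic CR condition. Consequently, decomposing $dg=dg_H+\theta(dg)\,\xi$, the Reeb term is annihilated by $df$, so that $d(f\circ g)=df\circ dg_H$. Then, using \eqref{3.14} for $g$ and the holomorphicity of $f$ along $HM$,
\[
d(f\circ g)\circ J^N=df\circ dg_H\circ J^N=df\circ J\circ dg_H=J^D\circ df\circ dg_H=J^D\circ d(f\circ g),
\]
which shows $f\circ g$ is holomorphic. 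Therefore $\rho(f(g(x)),f(g(y)))\leq c^N(x,y)$, and taking the supremum over all $(J,J^D)$-holomorphic $f$ yields $c'_M(g(x),g(y))\leq c^N(x,y)$.

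The only genuinely delicate point, which I would flag explicitly, is the vanishing $df(\xi)=0$ used in part (ii): without it the composite $f\circ g$ would pick up an extra contribution from the Reeb component of $dg$ and would fail the holomorphicity identity. Everything else is a direct unwinding of the definitions, which is why the statement can fairly be described as elementary.
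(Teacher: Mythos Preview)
Your proof is correct and is exactly the argument the paper has in mind: the paper states no proof, remarking only that the inequalities are ``easy to see'' from the definitions, and your composition argument is the natural unwinding of those definitions. The one point you flag---that $df(\xi)=0$ for a $(J,J^D)$-holomorphic $f$---is already recorded in the paper (see the remark following Definition~\ref{definition 3}), so nothing is missing.
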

Combining Theorem \ref{theorem 6.3} and Proposition \ref{proposition 6.4} \eqref{proposition 6.4 (1)}, we derive anthor Liouville theorem for $(J, J^N)$-holomorphic maps.
\begin{theorem}\label{theorem 6.5}
Let $(M,HM,J,\theta)$ be a complete pseudo-Hermitian manifold with non-negative pseudo-Hermitian Ricci curvature and $\|A\|_{C^1}$ bounded from above. Let $(N, J^N)$ be a complex manifold whose Carath\'eodory pseudodistance is a distance. Then any $(J, J^N)$-holomorphic map $f: M\rightarrow N$ is constant.
\end{theorem}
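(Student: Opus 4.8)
The plan is to combine Theorem~\ref{theorem 6.3} and Proposition~\ref{proposition 6.4} directly; almost all of the analytic content has already been packaged into those two results, so the argument reduces to carefully tracking the hypotheses. First I would observe that the assumption of \emph{non-negative} pseudo-Hermitian Ricci curvature means that, in the notation of Theorem~\ref{theorem 6.3}, we may take the lower bound constant to be $K=0$. Together with the completeness of $M$ and the bound on $\|A\|_{C^1}$, this places us exactly in the degenerate case of Theorem~\ref{theorem 6.3}, whose conclusion in that case is $c'_M\equiv 0$.

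Next I would invoke Proposition~\ref{proposition 6.4}~\eqref{proposition 6.4 (1)}. For the given $(J,J^N)$-holomorphic map $f$ and any two points $p,q\in M$, that inequality reads
\[ c^N(f(p),f(q))\le c'_M(p,q)=0, \]
the last equality coming from the previous step. Since any Carath\'eodory pseudodistance is non-negative, this forces $c^N(f(p),f(q))=0$ for all $p,q\in M$.

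Finally I would use the standing hypothesis that $c^N$ is a genuine \emph{distance} on $N$, i.e. that $c^N(x,y)=0$ implies $x=y$. Applying this with $x=f(p)$ and $y=f(q)$ yields $f(p)=f(q)$ for every pair $p,q\in M$, whence $f$ is constant, as desired.

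I expect no serious technical obstacle here beyond correctly matching hypotheses: the only place where care is needed is the passage $K=0\Rightarrow c'_M\equiv 0$ inside Theorem~\ref{theorem 6.3}, which itself rests on the Schwarz estimate $f^*ds^2_D\le K\,G_\theta$ of Theorem~\ref{theorem 4.1} degenerating to $f^*ds^2_D\le 0$ and thereby forcing every $(J,J^D)$-holomorphic map into the disc to be constant. Everything downstream of that is purely formal.
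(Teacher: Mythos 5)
Your argument is exactly the paper's: Theorem~\ref{theorem 6.3} with $K=0$ gives $c'_M\equiv 0$, Proposition~\ref{proposition 6.4}~\eqref{proposition 6.4 (1)} then forces $c^N(f(p),f(q))=0$, and the hypothesis that $c^N$ is a genuine distance yields $f(p)=f(q)$. The proposal is correct and matches the paper's intended proof step for step.
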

\begin{remark}
We can also deduce Corollary \ref{corollary 4.2} from Theorem \ref{theorem 6.5}, since the Carath\'eodory pseudodistance of unit disc $D$ is a distance.
\end{remark}

 \bigskip
   Tian Chong\ \
   
   School of Science, College of Arts and Sciences
   
   Shanghai Polytechnic University
   
   Shanghai, 201209, P. R. China
   
   chongtian@sspu.edu.cn\ \ \
   
\bigskip

   Yuxin Dong
   
   School of Mathematical Sciences
   
   Fudan University
   
   Shanghai, 200433, P. R. China
  
    yxdong@fudan.edu.cn
    
\bigskip

   Yibin Ren
   
   College of Mathematics and Computer Science
   
   Zhejiang Normal University
   
   Jinhua, 321004, Zhejiang, P.R. China
   
   allenryb@outlook.com

\bigskip

   Weike Yu
   
   School of Mathematical Sciences
   
   Fudan University
   
   Shanghai, 200433, P. R. China
   
   wkyu2018@outlook.com

\end{document}